\begin{document}

\parskip0pt
\parindent10pt

\newenvironment{answer}{\color{Blue}}{\color{Black}}
\newenvironment{exercise}
{\color{Blue}\begin{exr}}{\end{exr}\color{Black}}

\theoremstyle{plain} 
\numberwithin{equation}{section}

\newtheorem{thm}{Theorem}[section]
\newtheorem{theorem}[thm]{Theorem}
\newtheorem*{theorem*}{Theorem}
\newtheorem{proposition}[thm]{Proposition}
\newtheorem*{proposition*}{Proposition}
\newtheorem{porism}[thm]{Porism}
\newtheorem{lemma}[thm]{Lemma}
\crefname{cor}{cor}{corollaries}
\newtheorem{algorithm}[thm]{Algorithm}
\crefname{algorithm}{algorithm}{algorithms}
\newtheorem{conjecture}{Conjecture}
\newtheorem{que}[thm]{Question}
\newtheorem{corollary}[thm]{Corollary}
\newtheorem{conj}[thm]{Conjecture}
\newtheorem*{claim}{Claim}
\newtheorem*{cor*}{Corollary}
\newtheorem*{notate}{Notation}

\theoremstyle{definition}
\newtheorem*{conv}{Convention}
\newtheorem{definition}[thm]{Definition}
\crefname{definition}{definition}{definitions}
\newtheorem{example}{Example}
\newtheorem{remark}[thm]{Remark}
\crefname{remark}{remark}{remarks}

\newcommand{\doublerule}[1][.4pt]{%
  \noindent
  \makebox[0pt][l]{\rule[.7ex]{\linewidth}{#1}}%
  \rule[.3ex]{\linewidth}{#1}}


\renewcommand{\mod}[1]{{\ifmmode\text{\rm\ (mod~$#1$)}\else\discretionary{}{}{\hbox{ }}\rm(mod~$#1$)\fi}}

\newcommand{\Spec}{\text{Spec}}
\newcommand{\Sing}{\text{Sing}}
\newcommand{\Min}{\text{Min}}
\newcommand{\Ass}{\text{Ass}}
\newcommand{\Assmax}{\text{Ass}_{\text{max}}}
\newcommand{\Ann}{\text{Ann}}
\newcommand{\nb}{\mathbf n}
\newcommand{\ns}{\mathrel{\unlhd}}
\newcommand{\tr}{\text{tr}}
\newcommand{\wt}[1]{\widetilde{#1}}
\newcommand{\wh}[1]{\widehat{#1}}
\newcommand{\cbrt}[1]{\sqrt[3]{#1}}
\newcommand{\floor}[1]{\left\lfloor#1\right\rfloor}
\newcommand{\abs}[1]{\left|#1\right|}
\newcommand{\ds}{\displaystyle}
\newcommand{\nn}{\nonumber}
\newcommand{\im}{\text{Im}}
\newcommand{\re}{\text{Re}}
\renewcommand{\ker}{\textup{ker }}
\renewcommand{\char}{\textup{char }}
\renewcommand{\Im}{\textup{Im }}
\renewcommand{\Re}{\textup{Re }}
\newcommand{\area}{\textup{area }}
\newcommand{\isom}
    {\ds \mathop{\longrightarrow}^{\sim}}
\renewcommand{\ni}{\noindent}
\renewcommand{\bar}{\overline}
\newcommand{\morph}[1]
    {\ds \mathop{\longrightarrow}^{#1}}

\newcommand{\Gal}{\textup{Gal}}
\newcommand{\Aut}{\textup{Aut}}
\newcommand{\Crypt}{\textup{Crypt}}
\newcommand{\disc}{\textup{disc}}
\newcommand{\sgn}{\textup{sgn}}
\newcommand{\del}{\partial}

\newcommand{\mattwo}[4]{
\begin{pmatrix} #1 & #2 \\ #3 & #4 \end{pmatrix}
}

\newcommand{\vtwo}[2]{
\begin{pmatrix} #1 \\ #2 \end{pmatrix}
}
\newcommand{\vthree}[3]{
\begin{pmatrix} #1 \\ #2 \\ #3 \end{pmatrix}
}
\newcommand{\vcol}[3]{
\begin{pmatrix} #1 \\ #2 \\ \vdots \\ #3 \end{pmatrix}
}

\newsymbol\dnd 232D

\newcommand*\wb[3]{
  {\fontsize{#1}{#2}\usefont{U}{webo}{xl}{n}#3}}

\newcommand{\one}{{\rm 1\hspace*{-0.4ex} \rule{0.1ex}{1.52ex}\hspace*{0.2ex}}}

\renewcommand{\a}{\alpha}
\renewcommand{\b}{\beta}
\newcommand{\g}{\gamma}
\renewcommand{\d}{\delta}
\renewcommand{\l}{\lambda}
\renewcommand{\o}{\omega}
\renewcommand{\t}{\theta}
\renewcommand{\k}{\kappa}
\newcommand{\ve}{\varepsilon}
\renewcommand{\emptyset}{\varnothing}

\newcommand{\C}{\mathbb C}
\newcommand{\D}{\mathbb D}
\newcommand{\E}{\mathbb E}
\newcommand{\F}{\mathbb F}
\newcommand{\N}{\mathbb N}
\newcommand{\Q}{\mathbb Q}
\newcommand{\R}{\mathbb R}
\newcommand{\Z}{\mathbb Z}

\newcommand{\Ac}{\mathcal A}
\newcommand{\Bc}{\mathcal B}
\newcommand{\Cc}{\mathcal C}
\newcommand{\Dc}{\mathcal D}
\newcommand{\Ec}{\mathcal E}
\newcommand{\Fc}{\mathcal F}
\newcommand{\Gc}{\mathcal G}
\newcommand{\Hc}{\mathcal H}
\newcommand{\Kc}{\mathcal K}
\newcommand{\Lc}{\mathcal L}
\newcommand{\Nc}{\mathcal N}
\newcommand{\Oc}{\mathcal O}
\newcommand{\Pc}{\mathcal P}
\newcommand{\Qc}{\mathcal Q}
\newcommand{\Rc}{\mathcal R}
\newcommand{\Sc}{\mathcal S}
\newcommand{\Tc}{\mathcal T}
\newcommand{\Uc}{\mathcal U}
\newcommand{\Xc}{\mathcal X}
\newcommand{\Zc}{\mathcal Z}

\newcommand{\af}{\mathfrak a}
\newcommand{\cf}{\mathfrak c}
\newcommand{\df}{\mathfrak d}
\newcommand{\ef}{\mathfrak e}
\newcommand{\ff}{\mathfrak f}
\newcommand{\gf}{\mathfrak g}
\newcommand{\hf}{\mathfrak h}
\newcommand{\kf}{\mathfrak k}
\newcommand{\lf}{\mathfrak l}
\newcommand{\mf}{\mathfrak m}
\newcommand{\nf}{\mathfrak n}
\newcommand{\of}{\mathfrak o}
\newcommand{\pf}{\mathfrak p}
\newcommand{\qf}{\mathfrak q}
\newcommand{\rf}{\mathfrak r}
\renewcommand{\sf}{\mathfrak s}
\newcommand{\tf}{\mathfrak t}
\newcommand{\uf}{\mathfrak u}
\newcommand{\zf}{\mathfrak z}
\renewcommand{\hat}{\widehat}

\newcommand{\lr}[1]{\langle #1 \rangle}
\newcommand{\0}{{\vec 0}}

\newcommand{\ignore}[1]{}

\newcommand*\circled[1]{\tikz[baseline=(char.base)]{
            \node[shape=circle,draw,inner sep=2pt] (char) {#1};}}

\newcommand*\squared[1]{\tikz[baseline=(char.base)]{
            \node[shape=rectangle,draw,inner sep=2pt] (char) {#1};}}

\title{Completions of Countable Excellent Local Rings in Equal Characteristic Zero}

\subjclass{13B35, 13F40, 13J10}

\author{B. Baily}
\address{Department of Mathematics\\
University of Michigan \\
Ann Arbor, MI, USA}
\email{bbaily@umich.edu}

\author{S. Loepp}
\address{Department of Mathematics and Statistics \\
Williams College \\
Williamstown, MA, USA}
\email{sloepp@williams.edu}
\maketitle
\begin{abstract}
We characterize which complete local (Noetherian) rings $T$ containing the rationals are the completion of a countable excellent local ring $S$. We also discuss the possibilities for the map from the minimal prime ideals of $T$ to the minimal prime ideals of $S$ and we prove some characterization-style results. 
\end{abstract}
\section{Introduction}

In this paper, we explore the question ``Given a complete local (Noetherian) ring, under what circumstances is it the completion of a countable excellent local ring?'' When the complete local ring is presumed to have equal characteristic zero, our list of conditions is both necessary and sufficient. Our result shows that most complete local rings containing the rationals are the completion of a countable excellent local ring. Since complete local rings are excellent, all complete local rings are the completion of an excellent local ring (namely themself), and so the condition ``countable" in our question is what makes it interesting.  If a complete local ring $T$ contains the rationals then, by Cohen's structure theorem, it is isomorphic to a ring of the form $K[[x_1, \ldots ,x_n]]/I$ where $K$ is a field and $I$ is an ideal of $K[[x_1, \ldots ,x_n]]$.  If $K$ is uncountable, then, by previous results, $T$ cannot be the completion of a countable local ring. If, on the other hand, $K$ is countable, and $I = (f_1, \ldots ,f_m)$ where each $f_i$ is an element of the polynomial ring $K[x_1, \ldots ,x_n]$, then $T$ is the completion of the countable excellent local ring $K[x_1, \ldots ,x_n]_{(x_1, \ldots ,x_n)}/(f_1, \ldots ,f_m)$. Hence, the most interesting application of our result is when $K$ is countable and $I$ cannot be generated by elements of $K[x_1, \ldots ,x_n]$.

Our main theorem follows a line of previous results characterizing the completions of local domains \cite{Lech}, local unique factorization domains \cite{Heitmann1994CompletionsOL}, countable local domains \cite{controlling}, excellent local reduced rings \cite{Arnosti}, countable excellent local domains \cite{countable-domain}, and uncountable local domains with countable prime spectra \cite{countable-spec}.
Among the most recent of these is a result in \cite{countable-domain} that characterizes when a complete local ring $T$ is the completion of a countable excellent local domain. 
\begin{theorem}[\cite{countable-domain}, Theorem 3.9]\label{thm:ctbl-domain}
Let $T$ be a complete local ring containing the rationals and let $\mf$ be the maximal ideal of $T$. Then $T$ is the completion of a countable excellent local domain $A$ if and only if all of the following apply.
\begin{itemize}
    \item $T$ is equidimensional;
    \item $T$ is reduced;
    \item $T/\mf$ is countable.
\end{itemize}
\end{theorem}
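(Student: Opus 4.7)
If $A$ is a countable excellent local domain with $\widehat A = T$ and maximal ideal $\mf_A$, then $T/\mf \cong A/\mf_A$ is countable because $A$ is. Since $A$ is reduced (being a domain) and excellent (hence Nagata), its completion $T$ is reduced. Since $A$ is an excellent domain it is universally catenary, so by Ratliff's theorem $\widehat A = T$ is equidimensional.

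For \textbf{sufficiency}, the plan is to construct $A$ as a countable local subring of $T$ realized as an ascending union $A = \bigcup_n R_n$ of countable local subrings $R_0 \subseteq R_1 \subseteq \cdots$ of $T$, following the $N$-subring template developed in \cite{Heitmann1994CompletionsOL, controlling, countable-domain}. Since $T$ contains $\Q$ and $T/\mf$ is countable, Cohen's structure theorem yields a countable coefficient field $k \subseteq T$; I would take $R_0$ to be an appropriate countable localization of $k[x_1,\ldots,x_n]$, where the $x_i$ form a system of parameters for $T$. Enumerating $T$ as $\{t_i\}_{i \in \N}$ together with a countable list of excellence witnesses, at step $n$ I would enlarge $R_{n-1}$ by finitely many carefully chosen elements of $T$ so that the limit $A$ satisfies three properties: (i) $\widehat A = T$, arranged by surjectivity of $R_n \to T/\mf^n$ at every finite stage; (ii) $A$ is a domain, arranged by maintaining $R_n \cap \pf = (0)$ for every $\pf \in \Min T$; and (iii) $A$ is excellent.

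The three hypotheses on $T$ each enter naturally. Reducedness together with Noetherianness of $T$ imply $|\Min T| < \infty$ and $\bigcap_{\pf \in \Min T}\pf = (0)$, so prime avoidance lets me adjoin elements outside every minimal prime at each stage, maintaining (ii); equidimensionality of $T$ ensures that all minimal primes contract to $(0)$ in $A$ with uniform coheight, giving $\dim A = \dim T$; and countability of $T/\mf$ is what makes the whole enumeration possible. The principal obstacle is (iii). Once (i) and (ii) hold, $A$ is a domain whose completion $T$ is equidimensional, so Ratliff's theorem gives universal catenarity of $A$, and excellence in equal characteristic zero then reduces to geometric regularity of the formal fibers $T \otimes_A k(\pf)$ for every $\pf \in \Spec A$. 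In characteristic zero geometric regularity is ordinary regularity, and the regularity condition on each formal fiber can be encoded as a countable family of algebraic conditions on finite subextensions $R_n \subseteq T$; the inductive bookkeeping would be arranged to resolve one such condition at each step, yielding excellence of $A$ in the limit.
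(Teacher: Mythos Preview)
Your necessity argument is correct and matches the standard one.

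For sufficiency, the paper does not prove this theorem directly---it is quoted from \cite{countable-domain}---but the paper's main construction (Proposition~\ref{prop:partition-precompletionnew}) generalizes the proof from \cite{countable-domain}, so one can compare against that. Your outline has the right architecture (ascending union of countable quasi-local subrings, maintaining the domain property by keeping each $R_n$ disjoint from every $\pf\in\Min T$, then invoking Ratliff for universal catenarity), but there are two genuine problems.

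First, you write ``enumerating $T$ as $\{t_i\}_{i\in\N}$.'' This is impossible: when $\dim T\ge 1$, \Cref{lemma:cardinality-dim} gives $|T|\ge|\R|$. What one actually enumerates is $T/\mf^2$, which is countable by \Cref{lemma:cardinality-m-2}; the completion criterion is then Heitmann's (Proposition~\ref{proposition:the-machine}): surjectivity of $A\to T/\mf^2$ together with $IT\cap A=I$ for every finitely generated ideal $I$ of $A$. Your proposed criterion ``$R_n\to T/\mf^n$ surjective for all $n$'' is not what is used, and you do not explain how to achieve it while preserving the domain condition.

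Second, and more seriously, your treatment of excellence is a placeholder rather than an argument: ``regularity\dots can be encoded as a countable family of algebraic conditions\dots resolved one at a time.'' The actual mechanism, both in \cite{countable-domain} and in this paper, is concrete: for each $\pf\in\Spec A$ one writes $\Sing(T/\pf T)=V(I/\pf T)$ (closed since $T/\pf T$ is excellent), and then adjoins to $A$ a generating set for every $\qf\in\Min(I)$; see \Cref{lemma:find-gen-setnew} and \Cref{Lemma:Countable-Singularitiesnew}. Once generators of $\qf$ lie in $A$, one gets $\qf\cap A\supsetneq\pf$, which forces the formal fiber $(T/\pf T)_{\qf'}$ to be regular for every $\qf'$ lying over $\pf$ (otherwise $\qf'$ would contain some such $\qf$, contradicting $\qf\cap A=\pf$). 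This step---identifying \emph{which} countably many elements of $T$ to adjoin and \emph{why} that suffices---is the heart of the proof, and your sketch does not supply it.
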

Our main result (\Cref{Thm:ctbl-excellent-gen}) is a generalization of this theorem. To see how this result can be generalized, we first rephrase the theorem statement. First, recall that a ring $A$ is a domain if and only if it is reduced and has a unique minimal prime ideal, allowing us to replace ``$A$ is a domain'' with ``$A$ is reduced and has a unique minimal prime.'' Second, recall that an excellent ring $A$ is reduced if and only if its completion is reduced. This allows us to move the requirement that $A$ is reduced into the hypothesis of the theorem. These observations allow us to reformulate this result into the framework we will use for the remainder of the paper.
\begin{theorem}[Reformulation of \Cref{thm:ctbl-domain}]
Let $T$ be a complete local \textit{reduced} ring containing the rationals and let $\mf$ be the maximal ideal of $T$. Then $T$ is the completion of a countable excellent local ring $A$ with one minimal prime ideal if and only if $T$ is equidimensional and $T/\mf$ is countable.
\end{theorem}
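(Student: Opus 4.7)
The plan is to derive this statement as a direct rephrasing of \Cref{thm:ctbl-domain}, using the two observations already flagged in the text: (a) a ring is a domain if and only if it is reduced and has a unique minimal prime, and (b) an excellent local ring is reduced if and only if its completion is reduced. Both directions of the biconditional reduce to a translation between the phrase ``$A$ is a domain'' in the original theorem and the pair of conditions ``$T$ is reduced and $A$ has a unique minimal prime'' in the reformulation, so no new ring-theoretic content is needed.

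For the forward direction, I would start from the hypothesis that $T$ is the completion of a countable excellent local ring $A$ having a unique minimal prime. Since $T$ is reduced by hypothesis and $A$ is excellent, observation (b) forces $A$ to be reduced. Combined with the assumption that $A$ has a unique minimal prime, observation (a) implies that $A$ is a domain. Applying \Cref{thm:ctbl-domain} to $A$ then yields the two required conclusions: that $T$ is equidimensional and that $T/\mf$ is countable.

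For the reverse direction, I would assume that $T$ is a complete local reduced ring containing the rationals with $T$ equidimensional and $T/\mf$ countable. These are exactly the three bulleted hypotheses of \Cref{thm:ctbl-domain}, so that theorem supplies a countable excellent local domain $A$ with $\widehat{A} \cong T$. Since $A$ is a domain, the zero ideal is its unique minimal prime, producing a countable excellent local ring with a single minimal prime whose completion is $T$, as required.

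There is essentially no obstacle in this proof: the content lies entirely in \Cref{thm:ctbl-domain}, and the present statement is merely a repackaging designed to separate the reducedness hypothesis on $T$ from the minimal prime condition on $A$. The only small point that must be checked when invoking (b) in the forward direction is that excellence of $A$ (not just of $T$) is what powers the equivalence of reducedness between $A$ and $\widehat{A}$; this is standard and requires no further argument.
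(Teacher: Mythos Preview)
Your proposal is correct and matches the paper's own approach: the paper presents this theorem as a direct rephrasing of \Cref{thm:ctbl-domain} via exactly the two observations you use, without giving a separate formal proof. Your write-up simply makes explicit the translation the paper describes in prose.
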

Our main result generalizes this theorem by considering the case where $A$ is required to have $k$ minimal prime ideals, where $k$ is a positive integer, and removes the assumption that $T$ is reduced. Before we state our main result, we introduce some notation and a definition.

When we write that $(T, \mf)$ is a local ring, we mean that $T$ is a local (Noetherian) ring with maximal ideal $\mf$. If $T$ is a ring, then $\Min(T)$ denotes the minimal prime ideals of $T$. We use $\widehat{A}$ to denote the completion of the local ring $A$ with respect to its maximal ideal.

\begin{definition}\label{defn:equidim-params}
Let $(T, \mf)$ be a complete local ring. We define an equivalence relation $\sim_1$ on $\Min(T)$ as follows. Let $\pf, \qf\in \Min(T)$. We say that $\pf\sim_1 \qf$ if and only if all of the following hold.
\begin{itemize}
    \item $T_\pf$ is a field;
    \item $T_\qf$ is a field;
    \item We have $\dim(T/\pf) = \dim(T/\qf)$.
\end{itemize}
Similarly, we say $\pf\sim_2 \qf$ if and only if $\dim(T/\pf) = \dim(T/\qf)$. We then define $d_1(T) = \Min(T)/\sim_1$ and $d_2(T) = \Min(T)/\sim_2$. 
\end{definition}
Note that when $T$ satisfies Serre's $R_0$ condition (in particular, when $T$ is a reduced ring), we have that $T_\pf$ is a field for all $\pf\in\Min(T)$ and thus $d_1(T) = d_2(T)$.


The main result of this paper is the following result.
\begin{theorem}\label{Thm:ctbl-excellent-gen}
Let $(T, \mf)$ be a complete local ring containing the rationals and let $k$ be a positive integer. 
\begin{itemize}
    \item If both of the following conditions hold, then there exists a countable excellent local ring $A$ such that $\hat{A} = T$ and $|\Min(A)| = k$.
    \begin{enumerate}
        \item $T/\mf$ is countable;
        \item $|d_1(T)|\leq k\leq |\Min(T)|.$
    \end{enumerate}
    \item If there exists a countable excellent local ring $A$ such that $\hat{A} = T$ and $|\Min(A)| = k$, then
    \begin{enumerate}
        \item $T/\mf$ is countable and
        \item $|d_2(T)|\leq k\leq |\Min(T)|$.
    \end{enumerate}
\end{itemize}
\end{theorem}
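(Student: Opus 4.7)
The plan is to treat the two implications separately.

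The necessary direction follows from standard properties of the faithfully flat completion map $A\hookrightarrow \widehat{A}=T$. Countability of $T/\mf$ is immediate from $T/\mf\cong A/\mf_A$. Going-down for faithfully flat maps makes the contraction $\phi\colon \Min(T)\to\Min(A)$, $\pf\mapsto\pf\cap A$, surjective, giving $k\leq |\Min(T)|$. If $\phi(\pf)=\phi(\qf)=P$, both $\pf$ and $\qf$ are minimal over $PT$; excellence of $A$ makes $A/P$ an excellent local domain with equidimensional completion $T/PT=\widehat{A/P}$, so $\dim(T/\pf)=\dim(T/\qf)$, i.e.\ $\pf\sim_2\qf$. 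Hence $\phi$ factors through $\Min(T)/{\sim_2}$, giving $|d_2(T)|\leq k$.

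For the sufficient direction, I first fix a partition $\{G_1,\ldots,G_k\}$ of $\Min(T)$ with each $G_i$ contained in a single $\sim_1$-equivalence class. The hypothesis $|d_1(T)|\leq k\leq |\Min(T)|$ guarantees such a partition exists: begin with the $|d_1(T)|$ $\sim_1$-classes and subdivide the ``field'' classes (never the non-field singletons) into smaller pieces until exactly $k$ blocks remain. Setting $I_i=\bigcap_{\pf\in G_i}\pf$, each $T/I_i$ is reduced (an intersection of primes), equidimensional (since $G_i$ lies in a single $\sim_2$-class), contains $\Q$, and has countable residue field, so \Cref{thm:ctbl-domain} supplies a countable excellent local domain $B_i$ with $\widehat{B_i}=T/I_i$.

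The main construction is to build a countable excellent subring $A\subset T$ such that $\widehat{A}=T$, $\Min(A)=\{P_1,\ldots,P_k\}$, $P_iT=I_i$, and $A/P_i\cong B_i$. I would carry this out by an inductive precompletion argument in the spirit of \cite{controlling} and \cite{countable-domain}: realize $A$ as an ascending union $\bigcup_n A_n$ of countable subrings of $T$, adjoining at each stage elements of $T$ to realize the prospective primes $P_i$, to control associated primes, and to provide enough transcendence-like data so that in the limit $A$ is Noetherian with $\widehat{A}=T$, carries the prescribed minimal primes with $P_iT=I_i$, and has the $B_i$'s as quotients. The role of the $\sim_1$-compatible partition is that each non-field minimal prime $\pf$ of $T$ sits in a singleton block $G_i=\{\pf\}$, allowing the nonzero nilpotent ideal $\pf T_\pf$ to be realized inside $A_{P_i}$ as a controlled (essentially square-zero type) nilpotent extension of the domain $B_i$ whose completion is $T/\pf$.

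The main obstacle is to verify that $A$ itself---not merely each quotient $A/P_i$---is excellent, i.e.\ that the formal fibers $T\otimes_A k(Q)$ are geometrically regular at every $Q\in\Spec(A)$. Formal fibers at the $P_i$ themselves reduce to generic formal fibers of $B_i$ (regular by excellence of $B_i$), and the formal fiber at $\mf_A$ is trivially the residue field. The difficult case is formal fibers at intermediate primes $Q\in\Spec(A)$, particularly those lying over several of the $P_i$'s, where one must combine the excellence data of the various $B_i$'s coherently. I expect this to be where the bulk of the technical work lies and to rely on carefully designed ``universality'' conditions imposed on the $A_n$ during the induction, adapting the generic-flatness arguments that drive prior precompletion constructions.
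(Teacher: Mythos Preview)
Your argument for the necessary direction is correct and is essentially the paper's proof of \Cref{prop:necessary-conditionnew}.

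For the sufficient direction, your opening move---choosing a partition of $\Min(T)$ into $k$ blocks each contained in a single $\sim_1$-class---is exactly right and matches the paper. From there, however, your plan diverges and has a genuine gap. You propose to invoke \Cref{thm:ctbl-domain} to produce, for each block $G_i$, a countable excellent domain $B_i\subset T/I_i$, and then to build $A\subset T$ with $A/P_i\cong B_i$. The problem is that the $B_i$ are produced \emph{independently} in different quotients of $T$, and nothing forces them to be compatible. Concretely, for $b\in B_1$ with lift $t\in T$, the image $t+I_2$ lies in a fixed coset of $(I_1+I_2)/I_2$ in $T/I_2$; since $B_2$ is countable while such a coset is typically uncountable, there is no mechanism making that coset meet $B_2$. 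So the ``gluing'' you defer is not merely technical---it does not go through as stated. (Your reading of the singleton blocks for singular minimal primes as enabling a ``square-zero type'' nilpotent extension is also not what is happening; the nilpotent structure at a singular $\pf$ is not controlled in that way.)

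The paper never pre-builds the quotients. It constructs $A$ directly as an ascending union of CGIP-subrings (\Cref{defn:GIPNew}): countable quasi-local subrings of $T$ in which the contraction map $\Min(T)\to\Min(R)$ is frozen to the chosen partition, with an annihilator condition imposed only at \emph{nonsingular} minimal primes (this is why singular ones must sit in singleton blocks). Excellence is then forced by adjoining, at every stage, generating sets for each prime minimal over the defining ideal of $\Sing(T/\pf T)$ as $\pf$ ranges over the current ring's spectrum (\Cref{lemma:find-gen-setnew}, \Cref{Lemma:Countable-Singularitiesnew}, \Cref{prop:partition-precompletionnew}). This guarantees $(T/\pf T)_\qf$ is regular for every $\pf\in\Spec(A)$ and every $\qf$ above it, whence $A$ is quasi-excellent by \Cref{lemma:excellency-over-Q}; universal catenarity follows from the equidimensionality built into the $\sim_1$-partition via Ratliff's criterion.

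One further remark: your identification of the ``main obstacle'' is inverted. If one \emph{could} arrange $\widehat{A}=T$ with each $A/P_i$ excellent, then excellence of $A$ itself would be automatic: given $Q\in\Spec(A)$ and $\qf\in\Spec(T)$ above $Q$, pick any $P_i\subseteq Q$ and observe
\[
(T/QT)_\qf\;\cong\;\bigl((T/P_iT)\big/(Q/P_i)(T/P_iT)\bigr)_{\qf/P_iT},
\]
which is regular by quasi-excellence of $A/P_i$ with completion $T/P_iT$. So intermediate primes (even those containing several $P_i$) are not the difficulty; the difficulty is the construction of $A$ with the prescribed minimal-prime pattern and $\widehat{A}=T$, which is exactly what the CGIP machinery is designed to deliver.
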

Though we are not able to give necessary and sufficient conditions \textit{for each $k$}, our result has two important corollaries that characterize completions of classes of rings.
\begin{corollary}\label{cor:ctbl-excellent-red}
Let $(T,\mf)$ be a complete local ring containing the rationals and satisfying Serre's $R_0$ condition. Let $k$ be a positive integer. Then $T$ is the completion of a countable excellent local ring with $k$ minimal prime ideals if and only if $T/\mf$ is countable and $|d_2(T)| \leq k\leq |\Min(T)|$.
\end{corollary}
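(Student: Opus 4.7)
The plan is to derive this corollary as an immediate consequence of \Cref{Thm:ctbl-excellent-gen}, so the only work is to verify that the two conditions from the main theorem (the sufficient condition $|d_1(T)|\le k$ and the necessary condition $|d_2(T)|\le k$) coincide under the hypothesis that $T$ satisfies Serre's $R_0$ condition.

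First I would recall that Serre's $R_0$ condition says that $T_\pf$ is a regular local ring for every $\pf$ of height $0$. Since $T_\pf$ has dimension $0$ whenever $\pf\in\Min(T)$, and a regular local ring of dimension $0$ is a field, it follows that $T_\pf$ is a field for every $\pf\in\Min(T)$. This means the first two bullets in the definition of $\sim_1$ hold automatically for any pair $\pf,\qf\in\Min(T)$, and hence $\sim_1$ reduces to the single condition $\dim(T/\pf)=\dim(T/\qf)$, which is precisely $\sim_2$. Consequently $d_1(T)=d_2(T)$.

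With this equality in hand, the corollary follows directly. For sufficiency, if $T/\mf$ is countable and $|d_2(T)|\le k\le |\Min(T)|$, then also $|d_1(T)|\le k\le |\Min(T)|$, so the first bullet of \Cref{Thm:ctbl-excellent-gen} produces a countable excellent local ring $A$ with $\hat A=T$ and $|\Min(A)|=k$. For necessity, if such an $A$ exists, the second bullet of \Cref{Thm:ctbl-excellent-gen} yields that $T/\mf$ is countable and $|d_2(T)|\le k\le |\Min(T)|$.

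I do not anticipate any obstacle: the content of the corollary is the observation that $R_0$ collapses the gap between the sufficient and necessary conditions of the main theorem, and everything else is a direct invocation of \Cref{Thm:ctbl-excellent-gen}.
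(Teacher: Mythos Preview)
Your proof is correct and matches the paper's approach exactly: the paper's proof is the single line ``Since $T_\qf$ is a field for every $\qf\in \Min(T)$, we have $|d_1(T)| = |d_2(T)|$,'' which is precisely your observation that $R_0$ collapses $\sim_1$ to $\sim_2$, with the rest left implicit as an appeal to \Cref{Thm:ctbl-excellent-gen}. Your version simply spells out the details more fully.
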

\begin{corollary}\label{cor:ctbl-excellent-char}
Let $(T,\mf)$ be a complete local ring containing the rationals. The following are equivalent.
\begin{enumerate}
    \item $T$ is the completion of a countable excellent local ring with $|\Min(T)|$ minimal prime ideals;
    \item $T$ is the completion of a countable excellent local ring;
    \item $T/\mf$ is countable.
\end{enumerate}
\end{corollary}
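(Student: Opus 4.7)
The plan is to prove the implications $(1) \Rightarrow (2) \Rightarrow (3) \Rightarrow (1)$, and I expect all three to follow quickly from \Cref{Thm:ctbl-excellent-gen}. The conceptual point is that the corollary concerns the extremal case $k = |\Min(T)|$, in which both of the sandwich inequalities in \Cref{Thm:ctbl-excellent-gen} — the sufficient version $|d_1(T)| \leq k \leq |\Min(T)|$ and the necessary version $|d_2(T)| \leq k \leq |\Min(T)|$ — collapse at their upper endpoint, so the discrepancy between $d_1$ and $d_2$ that obstructs a general ``if and only if'' in the main theorem simply disappears here.

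The implication $(1) \Rightarrow (2)$ is automatic, since (1) is a special case of (2). For $(2) \Rightarrow (3)$, I would argue that if $\hat{A} = T$ with $A$ a countable local ring and $\mf_A$ its maximal ideal, then the canonical isomorphism $A/\mf_A \cong T/\mf$ shows that $T/\mf$ inherits countability from $A$. (This also appears as the first conclusion of the second bullet of \Cref{Thm:ctbl-excellent-gen}, but is more transparent from first principles.)

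The only substantive step is $(3) \Rightarrow (1)$, which I would establish by applying the first bullet of \Cref{Thm:ctbl-excellent-gen} with $k := |\Min(T)|$. This is a positive integer because $T$ is a nonzero Noetherian ring, so $k$ is a legal choice. Condition~(1) of the theorem's hypothesis is exactly the assumption (3) that $T/\mf$ is countable. Condition~(2), namely $|d_1(T)| \leq k \leq |\Min(T)|$, holds because $k = |\Min(T)|$ gives the upper inequality as an equality, and $d_1(T)$ is by definition the set of equivalence classes of an equivalence relation on $\Min(T)$, so $|d_1(T)| \leq |\Min(T)|$. The theorem then supplies a countable excellent local ring $A$ with $\hat{A} = T$ and $|\Min(A)| = |\Min(T)|$, which is precisely (1).

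I do not expect any real obstacles; the corollary is essentially a repackaging of \Cref{Thm:ctbl-excellent-gen} in the top case $k = |\Min(T)|$, where the refined invariants $d_1(T)$ and $d_2(T)$ cease to matter.
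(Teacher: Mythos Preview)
Your proposal is correct and follows essentially the same approach as the paper: both argue $(1)\Rightarrow(2)\Rightarrow(3)\Rightarrow(1)$, with $(2)\Rightarrow(3)$ coming from the residue field isomorphism (packaged in the paper as \Cref{prop:necessary-conditionnew}) and $(3)\Rightarrow(1)$ from the sufficient direction of the main theorem with $k=|\Min(T)|$ using $|d_1(T)|\leq|\Min(T)|$. The only cosmetic difference is that the paper cites the underlying \Cref{cor:sufficient-conditionnew} and \Cref{prop:necessary-conditionnew} directly rather than \Cref{Thm:ctbl-excellent-gen}.
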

Of the two parts of the main theorem, the more challenging of the two to prove is the first; that is, the construction of the ring $A$. The construction of this ring proceeds in two parts. First, in Section \ref{section-3}, given a complete local ring $T$ satisfying the hypothesis of the theorem and given a positive integer $k$, we build a countable local ring $S$ such that $\hat{S} = T$ and $|\Min(S)| = k$. In Section \ref{section-4}, we show that this ring can be extended to a countable excellent local ring $A\supseteq S$ without modifying the structure of the set of minimal prime ideals and without changing the completion. 

All rings in this paper are commutative with unity.  When we say that a ring is local, we mean that it is both Noetherian and it has exactly one maximal ideal.  For a ring that has exactly one maximal ideal and is not necessarily Noetherian, we use the term quasi-local.
\section{Preliminaries}\label{section-2}


In this section, we present background information needed to prove our main theorem.  As our main result is about excellent rings, we begin by recalling relevant definitions related to excellent rings.

\begin{definition}
\label{defn:formal-fiber}
Let $(A, \mf)$ be a local ring and let $\pf\in \Spec(A)$.  The \textbf{formal fiber ring of $A$ at $\pf$} is the ring $\hat{A}\otimes_A k(\pf)$ where $k(\pf):= A_{\pf}/\pf A_\pf$.
\end{definition}
\begin{definition}
\label{defn:geom-regular}
Let $K$ be a field and $B$ a $K$-algebra. $B$ is called \textbf{geometrically regular} over $K$ if for every finite field extension $L$ of $K$, the ring $B\otimes_K L$ is regular. 
\end{definition}
\begin{definition}[Grothendieck]\label{defn:excellent-local}
Let $(A, \mf)$ be a local ring. $A$ is \textbf{quasi-excellent} if its formal fiber rings are geometrically regular; that is, for each $\pf\in \Spec(A)$ and for every finite field extension $L$ of $k(\pf)$, the ring $\hat{A}\otimes_A L$ is regular. If $A$ is also universally catenary, then $A$ is \textbf{excellent}. 
\end{definition}
\begin{proposition}[\cite{Excellent}, Corollary 1.6]\label{prop:closed-variety}
    For an excellent local ring $(A,\mf)$, the singular locus of $A$ (that is, the set of prime ideals $\pf\in\Spec(A)$ such that $A_\pf$ is not a regular local ring) is closed in $\Spec(A)$.
\end{proposition}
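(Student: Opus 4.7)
The plan is to reduce closedness of $\Sing(A)$ to the analogous statement for the completion $\hat{A}$, using the quasi-excellence of $A$ to bridge the two.

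I would first verify the statement for $\hat{A}$ itself. By Cohen's structure theorem, $\hat{A}$ is a homomorphic image $R/I$ of a regular local ring $R$; in equal characteristic zero one may take $R = K[[x_1, \ldots, x_n]]$. The Jacobian criterion for quotients of regular local rings then exhibits $\Sing(\hat{A})$ as the vanishing locus of an explicit ideal $\mathfrak{J} \subset \hat{A}$ built from $I$ together with an appropriate Fitting ideal of the module of differentials attached to the presentation. Hence $\Sing(\hat{A}) = V(\mathfrak{J})$ is already closed in $\Spec(\hat{A})$.

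Next, I would exploit quasi-excellence to connect $\Sing(A)$ and $\Sing(\hat{A})$. The completion map $A \to \hat{A}$ is faithfully flat with geometrically regular fibers; in particular, for any $\pf \in \Spec(A)$ and any $\hat{\pf} \in \Spec(\hat{A})$ lying over $\pf$, the fiber $\hat{A}_{\hat{\pf}}/\pf \hat{A}_{\hat{\pf}}$ is a regular local ring. The standard ``regularity transfer'' across a faithfully flat morphism with regular fiber then yields that $A_\pf$ is regular if and only if $\hat{A}_{\hat{\pf}}$ is regular. Writing $\pi \colon \Spec(\hat{A}) \to \Spec(A)$ for the induced map, this gives $\Sing(\hat{A}) = \pi^{-1}(\Sing(A))$, and $\pi$ is surjective by faithful flatness.

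The final step is to descend closedness from $\hat{A}$ down to $A$. I would aim to prove the clean formula $\Sing(A) = V(\mathfrak{J} \cap A)$. The inclusion $\Sing(A) \subseteq V(\mathfrak{J} \cap A)$ is immediate: if $\pf \in \Sing(A)$ then any $\hat{\pf}$ over $\pf$ sits in $V(\mathfrak{J})$, forcing $\mathfrak{J} \cap A \subseteq \hat{\pf} \cap A = \pf$. The reverse inclusion is where I expect the main obstacle: given $\pf \supseteq \mathfrak{J} \cap A$, I need to produce a prime $\hat{\pf} \in V(\mathfrak{J})$ with $\hat{\pf} \cap A = \pf$. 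My plan here is to base-change along $A \to k(\pf)$ and argue that $(\hat{A}/\mathfrak{J}) \otimes_A k(\pf)$ is nonzero -- equivalently, that $\mathfrak{J} + \pf \hat{A}$ does not generate the unit ideal after localizing at $\pf$ -- at which point any prime of this nonempty fiber pulls back through the faithfully flat map to give the required $\hat{\pf}$. A more conceptual alternative, which sidesteps the delicate ideal-theoretic bookkeeping, is to observe that $\Sing(A)$ is automatically specialization-closed in $\Spec(A)$ (since localizations of regular local rings are regular), and then combine this with enough constructibility inherited from the closed set $\Sing(\hat{A})$ to invoke the standard lemma that a specialization-closed constructible subset of a Noetherian spectrum is closed.
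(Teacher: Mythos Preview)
The paper does not prove this proposition; it is quoted from an external reference (\cite{Excellent}, Corollary 1.6) and used as a black box, so there is no in-paper argument to compare your proposal against.

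A few remarks on your sketch nonetheless. In Step~1 you justify closedness of $\Sing(\hat A)$ via the Jacobian criterion, and you only carry this out in equal characteristic zero. The proposition as stated carries no such hypothesis, so in mixed or positive characteristic you would need a different route to the (still true) fact that complete local rings have closed singular locus. Since the paper works throughout with rings containing $\Q$, this gap is harmless for the applications that follow, but it is a gap relative to the proposition as written. In Step~3, the descent of closedness can be done much more directly than either of your two proposed arguments: a faithfully flat quasi-compact morphism of schemes is submersive (a topological quotient map), so a subset $Z \subseteq \Spec(A)$ is closed as soon as $\pi^{-1}(Z)$ is closed in $\Spec(\hat A)$. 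Since you have already established $\pi^{-1}(\Sing(A)) = \Sing(\hat A) = V(\mathfrak J)$, this finishes the argument at once, with no need to chase primes in fibers or to assemble constructibility by hand.
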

We now state two results that will be used to prove our main theorem. We use Proposition \ref{proposition:the-machine} to show that our final ring has the desired completion, and we use Lemma \ref{lemma:excellency-over-Q} to show that our final ring is excellent.
\begin{proposition}\label{proposition:the-machine}
Let $(T,\mf)$ be a complete local ring with quasi-local subring $(R, R \cap \mf)$. Then $R$ is Noetherian and $\widehat R = T$ if and only if the natural map $R \to T/\mf^2$ is onto and $IT\cap R = I$ for every finitely-generated ideal $I$ of $R$.
\end{proposition}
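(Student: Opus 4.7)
The forward direction is the standard behavior of completions of Noetherian local rings: if $R$ is Noetherian with $\widehat{R} = T$, then the completion map is faithfully flat, which gives $IT \cap R = I$ for every (finitely-generated) ideal $I$, and the isomorphisms $R/(R\cap\mf)^n \xrightarrow{\sim} T/\mf^n$ for all $n\geq 1$ specialize to $n=2$ to give surjectivity of $R\to T/\mf^2$.

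For the reverse direction, set $\nf := R\cap\mf$. The plan is to show in order that (i) $\mf = \nf T$, (ii) $\nf$ is finitely generated, (iii) $R\to T/\mf^n$ is surjective for every $n$, (iv) $R\cap\mf^n = \nf^n$ for every $n$, and then (v) $R$ is Noetherian with $\widehat R = T$. For (i), the hypothesis that $R\to T/\mf^2$ is onto gives $T = R + \mf^2$, and intersecting with $\mf$ yields $\mf = \nf + \mf^2 \subseteq \nf T + \mf\cdot\mf$; then Nakayama's lemma applied to the finitely generated $T$-module $\mf/\nf T$ forces $\mf = \nf T$. For (ii), choose $a_1,\dots,a_m \in \nf$ whose images generate $\mf/\mf^2$ (possible since $R\to T/\mf^2$ is onto); then $(a_1,\dots,a_m)T = \mf$, and the finitely-generated ideal hypothesis applied to $I = (a_1,\dots,a_m)R$ gives $\nf = \mf \cap R = IT\cap R = I$, so $\nf$ is finitely generated.

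Step (iii) follows by induction: given surjectivity mod $\mf^n$, any $x\in T$ can be written as $r + \sum_i b_i t_i$ with $r\in R$, $b_i \in \nf^n$ (since $\mf^n = \nf^n T$ by (i)), and then replacing each $t_i$ by its image mod $\mf$ under the surjection $R\to T/\mf$ produces an element of $R$ congruent to $x$ modulo $\nf^n \cdot \mf \subseteq \mf^{n+1}$. Step (iv) then follows because $\nf^n$ is a finitely generated ideal of $R$ with $\nf^n T = \mf^n$, so the hypothesis gives $\nf^n = \nf^n T \cap R = \mf^n \cap R$. Together, (iii) and (iv) yield $R/\nf^n \xrightarrow{\sim} T/\mf^n$ for every $n$.

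For (v), given any ideal $J$ of $R$, Noetherianity of $T$ lets us pick finitely many $j_1,\dots,j_s \in J$ with $JT = (j_1,\dots,j_s)T$ (build the chain $(j_1) \subseteq (j_1,j_2) \subseteq \cdots$ inside $T$ using elements of $J$; it must stabilize). The hypothesis applied to $I = (j_1,\dots,j_s)R$ then gives $J \subseteq JT \cap R = I$, so $J = I$ is finitely generated, and $R$ is Noetherian. Finally, passing to inverse limits in $R/\nf^n \cong T/\mf^n$ yields $\widehat R = T$, using that $T$ is $\mf$-adically complete. The main subtlety of the argument is step (ii), where one must bootstrap from the finitely-generated-ideal hypothesis (which a priori refers to a notion of finite generation we have yet to verify is ample in $R$) to finite generation of $\nf$ itself; once this is in place, the remaining steps run smoothly by induction and direct application of the hypotheses.
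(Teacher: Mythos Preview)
Your argument is correct in both directions. The paper does not actually prove this proposition: it cites \cite{Heitmann1994CompletionsOL}, Proposition~1, for the backward implication and \cite{countable-domain}, Corollary~2.5, for the forward one. What you have written is essentially Heitmann's original argument for the backward direction---use surjectivity onto $T/\mf^2$ and Nakayama to get $\mf=\nf T$, apply the contraction hypothesis to a finite generating set to get $\nf$ finitely generated, iterate to obtain $R/\nf^n\cong T/\mf^n$, and deduce Noetherianity of $R$ from that of $T$---together with the standard faithful-flatness argument for the forward direction. So you have supplied the proof the paper defers to the literature, and the route is the same.
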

\begin{proof}
For the backwards direction, see \cite{Heitmann1994CompletionsOL}, Proposition 1. For the forwards direction, see \cite{countable-domain}, Corollary 2.5.
\end{proof}
\begin{lemma}\label{lemma:excellency-over-Q}
Let $(T,\mf)$ be a complete local ring containing the rationals. Given a local subring $(A, A\cap \mf)$ of $T$ such that $\widehat A = T$, the ring $A$ is quasi-excellent if and only if, for every $\pf\in \Spec(A)$ and every $\qf\in \Spec(T)$ lying over $\pf$, the ring $(T/\pf T)_\qf$ is a regular local ring.
\end{lemma}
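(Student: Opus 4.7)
The plan is to reduce the definition of quasi-excellence (which a priori requires checking regularity of $\widehat A \otimes_A L$ for every finite extension $L$ of $k(\pf)$) to a statement about the single ring $T \otimes_A k(\pf)$, and then translate that statement about a non-local ring into the stated condition about the local rings $(T/\pf T)_\qf$.

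First I would invoke the characteristic zero hypothesis: since $A \supseteq \Q$, every residue field $k(\pf)$ has characteristic zero, and hence every finite field extension $L/k(\pf)$ is separable. For a Noetherian $k(\pf)$-algebra $B$, the base change $B \otimes_{k(\pf)} L$ is then étale over $B$, and étale local maps preserve regularity of the source (flatness together with a field fiber). Equivalently, in characteristic zero there are no nontrivial finite purely inseparable extensions of $k(\pf)$, so Matsumura's criterion for geometric regularity collapses to plain regularity. Thus $A$ is quasi-excellent if and only if for every $\pf \in \Spec(A)$, the formal fiber ring $\widehat A \otimes_A k(\pf) = T \otimes_A k(\pf)$ is regular as a (Noetherian) ring.

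Next I would identify $T \otimes_A k(\pf)$ concretely. Writing $S = A \setminus \pf$, we have
\[
T \otimes_A k(\pf) \;=\; T \otimes_A (A_\pf/\pf A_\pf) \;=\; S^{-1}T / \pf \cdot S^{-1}T.
\]
Its prime ideals correspond bijectively to those primes $\qf \in \Spec(T)$ satisfying $\pf T \subseteq \qf$ and $\qf \cap S = \emptyset$. The first condition forces $\pf \subseteq \qf \cap A$, while the second gives $\qf \cap A \subseteq \pf$, so together $\qf \cap A = \pf$; that is, these are precisely the primes of $T$ lying over $\pf$. For any such $\qf$, localizing $S^{-1}T/\pf S^{-1}T$ at the corresponding prime yields $T_\qf/\pf T_\qf = (T/\pf T)_\qf$. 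Since a Noetherian ring is regular iff all of its localizations at prime ideals are regular local, this gives
\[
T \otimes_A k(\pf) \text{ is regular} \;\iff\; (T/\pf T)_\qf \text{ is a regular local ring for every } \qf \in \Spec(T) \text{ with } \qf \cap A = \pf.
\]

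Combining the two reductions, $A$ is quasi-excellent if and only if the stated condition holds. I do not anticipate any serious technical obstacle: the only substantive point is the char-zero collapse of geometric regularity to regularity, which is standard; the rest is bookkeeping with localizations and with the $\Spec$ of a tensor product.
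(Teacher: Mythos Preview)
Your argument is correct: the characteristic-zero reduction of geometric regularity to regularity is standard, and your identification of $\Spec\bigl(T\otimes_A k(\pf)\bigr)$ with the set of primes of $T$ lying over $\pf$, together with the computation of the corresponding localizations as $(T/\pf T)_\qf$, is exactly what is needed. The paper itself does not spell out a proof but simply invokes Lemma~2.8 of \cite{countable-domain}, observing that the extra hypothesis there (that $T$ be equidimensional) is used only to obtain universal catenarity and is therefore irrelevant for the quasi-excellence statement; your proof is a self-contained unpacking of what that cited lemma is doing.
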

\begin{proof}
This lemma is proven in \cite{countable-domain} (Lemma 2.8) with the additional assumption that $T$ is equidimensional. When this hypothesis is removed, the only conclusion that is lost is the fact that $A$ is universally catenary.
\end{proof}
We end this section with two results on the cardinality of complete local rings and their quotient rings. The first is well-known and a short proof is contained within the listed reference.
\begin{lemma}\label{lemma:cardinality-m-2}
Let $(T, \mf)$ be a local ring. If $T/\mf$ is finite, then $T/\mf^n$ is finite for all $n$. If $T/\mf$ is infinite, then $|T/\mf| = |T/\mf^n|$ for all $n$. 
\end{lemma}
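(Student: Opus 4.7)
The plan is to proceed by induction on $n$, using the short exact sequence
\[
0 \to \mf^{n-1}/\mf^n \to T/\mf^n \to T/\mf^{n-1} \to 0
\]
which gives $|T/\mf^n| = |T/\mf^{n-1}|\cdot|\mf^{n-1}/\mf^n|$. The base case $n=1$ is trivial, and $n=0$ gives the zero ring.

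The inductive step depends on controlling $|\mf^{n-1}/\mf^n|$. Since $T$ is Noetherian, $\mf^{n-1}$ is a finitely generated ideal, so $\mf^{n-1}/\mf^n$ is a finitely generated $T/\mf$-module. Thus there is a surjection $(T/\mf)^r \twoheadrightarrow \mf^{n-1}/\mf^n$ for some $r \geq 0$, giving $|\mf^{n-1}/\mf^n| \leq |(T/\mf)^r|$.

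In the first case, $T/\mf$ finite forces $(T/\mf)^r$ to be finite, hence $|\mf^{n-1}/\mf^n|$ is finite; combined with the inductive hypothesis that $T/\mf^{n-1}$ is finite, the displayed product shows $T/\mf^n$ is finite. In the second case, $T/\mf$ infinite gives $|(T/\mf)^r| = |T/\mf|$, so $|\mf^{n-1}/\mf^n| \leq |T/\mf|$; combined with the inductive hypothesis $|T/\mf^{n-1}| = |T/\mf|$, the product yields $|T/\mf^n| \leq |T/\mf|\cdot|T/\mf| = |T/\mf|$, using that an infinite cardinal squared equals itself. The reverse inequality $|T/\mf^n| \geq |T/\mf|$ is immediate from the surjection $T/\mf^n \twoheadrightarrow T/\mf$.

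There is no real obstacle here; the only mild subtlety is remembering to invoke $\kappa\cdot\kappa=\kappa$ for infinite cardinals $\kappa$ (a consequence of the axiom of choice) to collapse the product in the infinite case.
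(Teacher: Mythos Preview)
Your argument is correct. The paper does not give its own proof of this lemma; it simply cites \cite{controlling}, Lemma~2.12. Your self-contained induction via the exact sequence $0\to\mf^{n-1}/\mf^n\to T/\mf^n\to T/\mf^{n-1}\to 0$, together with the observation that $\mf^{n-1}/\mf^n$ is a finite-dimensional $T/\mf$-vector space, is exactly the standard proof one would expect to find in the cited reference.
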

\begin{proof}
See \cite{controlling}, Lemma 2.12.
\end{proof}
\begin{lemma}[\cite{control-formal-fiber}, Lemma 2.2]\label{lemma:cardinality-dim}
Let $(T, \mf)$ be a complete local ring with $\dim(T)\geq 1$. If $\pf$ is a non-maximal prime ideal of $T$, then $|T/\pf| = |T| \geq |\R|$.
\end{lemma}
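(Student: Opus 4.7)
The plan is to show that both $|T|$ and $|T/\pf|$ coincide with the same cardinal
\[
\kappa \,:=\, \max\bigl(2^{\aleph_0},\, |T/\mf|^{\aleph_0}\bigr),
\]
by squeezing each between a completeness-based upper bound and a lower bound obtained from Cohen's structure theorem applied to a domain quotient.

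For the upper bound, completeness gives an injection $T \hookrightarrow \prod_{n \geq 1} T/\mf^n$, and by \Cref{lemma:cardinality-m-2} each factor $T/\mf^n$ has cardinality at most $\max(\aleph_0, |T/\mf|)$. Hence $|T| \leq \max(\aleph_0, |T/\mf|)^{\aleph_0} = \kappa$. The same argument applied to $T/\pf$, which is also complete and has residue field $T/\mf$, gives $|T/\pf| \leq \kappa$.

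For the matching lower bound, I would pass to a domain quotient and invoke Cohen. Choose a minimal prime $\qf$ of $T/\pf$ with $\dim\bigl((T/\pf)/\qf\bigr) = \dim(T/\pf)$; since $\pf$ is non-maximal, this dimension is at least $1$. Then $(T/\pf)/\qf$ is a complete local \emph{domain} of positive dimension, so by Cohen's structure theorem it is module-finite over a power series ring $R_0[[y_1, \ldots, y_d]]$ with $d \geq 1$. Here $R_0$ is either a coefficient field isomorphic to $T/\mf$ (in equal characteristic) or a complete DVR with residue field $T/\mf$ containing $\Z_p$ (in mixed characteristic); in either case $|R_0[[y_1]]| = |R_0|^{\aleph_0} \geq \kappa$. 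Hence $|T/\pf| \geq |(T/\pf)/\qf| \geq \kappa$, and the same argument applied to $T$ itself (using that there exists a minimal prime $\qf'$ of $T$ with $\dim(T/\qf') = \dim T \geq 1$) gives $|T| \geq \kappa$.

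Combining the two bounds yields $|T| = |T/\pf| = \kappa \geq 2^{\aleph_0} = |\R|$. The main point of care is verifying $|R_0|^{\aleph_0} \geq \kappa$ uniformly: when $T/\mf$ is finite, one relies on $R_0[[y_1]]$ (or $R_0$ itself, in mixed characteristic) having at least $2^{\aleph_0}$ elements, while when $T/\mf$ is infinite one uses $|R_0| \geq |T/\mf|$ to get $|R_0|^{\aleph_0} \geq |T/\mf|^{\aleph_0}$. Once the Cohen-theoretic inclusion of a one-variable power series ring is in place, the rest is routine cardinal arithmetic.
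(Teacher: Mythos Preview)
The paper does not supply its own proof of this lemma; it is simply quoted as Lemma~2.2 of \cite{control-formal-fiber} with no argument given, so there is no in-paper proof to compare your approach against.

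Your argument is correct and is essentially the standard one. The upper bound via the embedding $T \hookrightarrow \prod_n T/\mf^n$ together with \Cref{lemma:cardinality-m-2} is clean, and the lower bound via Cohen's structure theorem applied to a positive-dimensional complete local domain quotient is exactly the right idea. One minor expository point: in mixed characteristic, Cohen's theorem presents the domain as module-finite over $V[[y_1,\ldots,y_{d-1}]]$ (one fewer formal variable, since $p \in V$ already serves as a regular parameter), so when $d = 1$ the regular subring is just $V$ itself and there is no $y_1$. You acknowledge this in your final paragraph, but the earlier sentence asserting the existence of $R_0[[y_1,\ldots,y_d]]$ with $d \geq 1$ in both characteristic cases is slightly misstated. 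This does not affect the substance, since, as you observe, $|V| = |T/\mf|^{\aleph_0} = \kappa$ already. You might also note that your $\kappa = \max(2^{\aleph_0}, |T/\mf|^{\aleph_0})$ simplifies to $|T/\mf|^{\aleph_0}$ in every case, as any field has at least two elements.
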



\section{Building the Base Ring}\label{section-3}
The main aim of this section is, given a particular complete local ring $T$, to construct a base ring for $T$. That is, we
construct a countable local subring $S$ of $T$ such that $\widehat{S} = T$ and $\Min(S)$ has the desired structure. Many of the arguments we use for our construction are adapted from \cite{Arnosti}, and so we use definitions and results from there.
To begin, we recall two key definitions from \cite{Arnosti}.
\begin{definition}[\cite{Arnosti}, Definition 1.1]
Let $(T, \mf)$ be a complete local ring. Let $\Cc = \{\qf_1, \dots, \qf_n\}$ be a finite collection of incomparable non-maximal prime ideals of $T$. A partition $\Pc = \{\Cc, \{\Cc_i\}_{i=1}^n\}$ of $\Cc$ is \textbf{feasible} if the following hold for all $\rf\in \text{Ass}(T)$.
\begin{enumerate}
    \item There exists some $\qf_j$ such that $\rf \subseteq \qf_j$.
    \item There exists exactly one $l$ such that when $\rf\subseteq \qf_j, \qf_j\in \Cc_l$.
\end{enumerate}
Note that when $\Cc = \Min(T)$, {\em all} partitions of $\Cc$ are feasible. Although to prove the results in \cite{Arnosti}, it is important to consider cases where $\Cc$ is not $\Min(T)$, in this article, we will exclusively be in the case where $\Cc = \Min(T)$.  Therefore, in this paper, all partitions of $\Cc$ will be feasible.
\end{definition}


 %
%
%
%
%
\begin{definition}[\cite{Arnosti}, Definition 2.6]\label{defn:IP}
 Let  $(R, R\cap \mf)$ be a quasi-local subring of the complete local ring $(T,\mf)$. Let $\Pc = \{\Cc, \{\Cc_i\}_{i=1}^n\}$ be a feasible partition on a finite collection of incomparable non-maximal prime ideals of $T$. Then $R$ is called an \textbf{Intersection-Preserving subring} (abbreviated IP-subring)  of $T$
 if the following conditions hold:
\begin{enumerate}
    \item $R$ is infinite;
    \item \label{two} For any $\pf \in \Cc$, $R \cap \pf = R \cap \qf$ for any $\qf \in \Min(T)$ satisfying $\qf \subseteq \pf$;
    \item \label{three} For $\pf, \pf' \in \Cc$, $\pf, \pf' \in \Cc_i$ if and only if $R \cap \pf = R \cap \pf'$;
    \item For all $\pf\in \Cc, r\in \pf\cap R$ we have $\Ann_T(r)\not\subset \pf$.
\end{enumerate}
The ring $R$ is called a \textbf{small intersection preserving subring} (abbreviated SIP-subring) of $T$ if, additionally, $|R| <|T|$.
\end{definition}

If $\Cc = \Min(T)$, then condition (\ref{two}) automatically holds, so it need not be checked. One important property of IP-subrings is that, by condition (\ref{three}), two elements of $\Cc$ lie over the same prime ideal of the IP-subring if and only if they are in the same element of the partition.

Because we are interested in constructing countable excellent rings, it will be useful to replace the condition $|R| < |T|$ in the definition of SIP-subrings with the condition that $R$ is countable.  This motivates the following definition.

\begin{definition}
Let $(T,\mf)$ be a complete local ring and let $\Pc = \{\Cc, \{\Cc_i\}_{i=1}^n\}$ be a feasible partition on a finite collection of incomparable non-maximal prime ideals of $T$.  If $R$ is a countable IP-subring of $T$, we call $R$ a \textbf{countable intersection preserving subring} (abbreviated CIP-subring) of $T$.
\end{definition}
\begin{remark}\label{rmk:precon}
For many results in this section, we will assume that $(T, \mf)$ is a complete local ring, $\dim T\geq 1$ and that $\Pc = \{\Cc, \{\Cc_1, \dots, \Cc_n\}\}$ is a feasible partition of a finite collection of incomparable non-maximal prime ideals of $T$.
\end{remark}
%
%
%
%
%
%
%
%
We now state several lemmas based on results from \cite{Arnosti} that will be key towards building our base ring.
\begin{lemma}\label{lemma:Arnosti-unioning}
Let $T, \Pc$ be as in \Cref{rmk:precon}. Let $(R_i)_{i\in \Z^+}$ be a countable chain of CIP-subrings of $T$. Then $R:=\bigcup_{i\in \Z^+}R_i$ is a CIP-subring of $T$.
\end{lemma}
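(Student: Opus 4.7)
The plan is to verify the defining properties of a CIP-subring for $R = \bigcup_{i \in \Z^+} R_i$ one at a time, and observe that each property either passes to the union by a direct element-chase or is already an assertion about $T$ (and therefore passes automatically).

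First I would handle the structural properties. Since each $R_i$ is a subring of $T$ and the $R_i$ form an ascending chain, $R$ is a subring of $T$; a countable union of countable sets is countable, giving $|R| \leq \aleph_0$, and $R$ is infinite because it contains each (infinite) $R_i$, so condition (1) of \Cref{defn:IP} holds and $R$ is countable. To see that $R$ is quasi-local with maximal ideal $R \cap \mf$, observe that any $r \in R \setminus (R \cap \mf)$ lies in some $R_i$ and is not in the maximal ideal $R_i \cap \mf$ of the quasi-local ring $R_i$; hence $r$ is a unit in $R_i$, so its inverse already lives in $R_i \subseteq R$, and thus $r$ is a unit in $R$.

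Next I would verify conditions (2), (3), and (4). For any $\pf \in \Cc$ and any $\qf \in \Min(T)$ with $\qf \subseteq \pf$, one has
\[
R \cap \pf = \bigcup_{i} (R_i \cap \pf) = \bigcup_{i} (R_i \cap \qf) = R \cap \qf,
\]
using condition (2) for each $R_i$; this gives condition (2). For condition (4), any $r \in \pf \cap R$ lies in some $R_j$, so $r \in \pf \cap R_j$ and the IP-property for $R_j$ yields $\Ann_T(r) \not\subset \pf$; this is a statement about $T$, independent of the subring, so it holds as written for $R$.

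The only slightly less mechanical verification is condition (3), so I would do this one with care. If $\pf, \pf' \in \Cc_i$, then $R_j \cap \pf = R_j \cap \pf'$ for all $j$ by the IP-property of $R_j$, and unioning over $j$ gives $R \cap \pf = R \cap \pf'$. Conversely, suppose $\pf \in \Cc_i$ and $\pf' \in \Cc_{i'}$ with $i \neq i'$. For any fixed $j$, the IP-property for $R_j$ gives $R_j \cap \pf \neq R_j \cap \pf'$, so there is an element $r \in R_j$ belonging to exactly one of $\pf$, $\pf'$; since $r \in R$ and $\pf, \pf'$ are still distinguished as ideals of $T$, the same $r$ witnesses $R \cap \pf \neq R \cap \pf'$. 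I do not anticipate any real obstacle here: the lemma is essentially a bookkeeping exercise, and the only point that requires any thought is the converse in condition (3), which is handled by transporting the witnessing element from some $R_j$ up to $R$.
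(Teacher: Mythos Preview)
Your proof is correct; it verifies the CIP-subring axioms for the union directly and is a perfectly valid argument. The paper proceeds differently: it observes that since $\dim T \geq 1$, \Cref{lemma:cardinality-dim} forces $T$ to be uncountable, so each countable $R_i$ automatically satisfies $|R_i| < |T|$ and is therefore an SIP-subring; the IP-subring property of the union then follows by citing Lemma~3.1 of \cite{Arnosti}, and countability of $R$ is immediate. In effect the paper outsources the axiom-by-axiom verification you carry out to the existing SIP result, at the cost of invoking the extra cardinality lemma to bridge CIP to SIP. Your route is more self-contained and avoids that detour; the paper's route is shorter on the page because the real work is already done in the cited reference.
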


\begin{proof}
By Lemma \ref{lemma:cardinality-dim}, $T$ is uncountable and so each $R_i$ is an SIP-subring of $T$. By Lemma 3.1 in \cite{Arnosti}, $R$ is an IP-subring of $T$.  As $R$ is a countable union of countable sets, it is countable, and we have that $R$ is a CIP-subring of $T$.
\end{proof}
%
%
%
%
%

%
%

%
%

%
%
%
%
%
\begin{lemma}\label{lemma:upwards-closed}
Let $S_0\subseteq S_1\subseteq \dots$ be a countable chain of subrings of a ring $R$ and write $S = \bigcup_{i\geq 0} S_i$. If $JR\cap S_i = J$ for each $i$ and for every finitely generated ideal $J$ of $S_i$, then $IR\cap S = I$ for every finitely generated ideal $I$ of $S$.
\end{lemma}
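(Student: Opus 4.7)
The plan is to reduce the problem for the union $S$ to the corresponding statement for a single $S_i$ with $i$ large enough, by exploiting the fact that finitely many elements in an ascending union all lie in a single term of the chain.

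First, I would fix a finitely generated ideal $I = (a_1, \dots, a_m)$ of $S$. The inclusion $I \subseteq IR \cap S$ is immediate, so the whole content is in the reverse inclusion. Since each generator $a_j$ lies in the union $\bigcup_i S_i$, I can choose an index $N$ large enough that $a_1, \dots, a_m \in S_N$. Set $J_N = (a_1, \dots, a_m) S_N$, a finitely generated ideal of $S_N$. More generally, for any $i \geq N$, let $J_i = (a_1, \dots, a_m) S_i$, which is a finitely generated ideal of $S_i$ satisfying $J_i R = IR$.

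Now let $x \in IR \cap S$. Because $S = \bigcup_i S_i$, there is some $M$ with $x \in S_M$; by replacing $M$ with $\max(M,N)$ if necessary, I may assume $M \geq N$, so that $J_M$ is defined and $x \in S_M$. Then $x \in IR = J_M R$, so $x \in J_M R \cap S_M$. Applying the hypothesis to the finitely generated ideal $J_M$ of $S_M$, we get $J_M R \cap S_M = J_M$, hence $x \in J_M = (a_1, \dots, a_m) S_M \subseteq (a_1, \dots, a_m) S = I$. This yields $IR \cap S \subseteq I$, completing the proof.

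There is no real obstacle here: the argument is entirely bookkeeping with indices in the chain, and the two moves — pushing the generators of $I$ down to a single $S_N$, and pushing the witness $x$ down to a single $S_M$, then taking the maximum — are standard for directed unions. The only subtlety worth flagging in the write-up is that the hypothesis needs to be applied to the ideal $J_M$ of $S_M$ rather than $J_N$ of $S_N$, since $x$ is only guaranteed to lie in $S_M$, not in $S_N$.
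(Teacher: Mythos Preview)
Your proof is correct and is essentially the same as the paper's: both pick a single index $j$ in the chain large enough to contain the generators of $I$ and the test element, and then apply the hypothesis to the ideal $(a_1,\dots,a_m)S_j$. The only cosmetic difference is that the paper picks one index for both at once, whereas you first pick $N$ for the generators and $M$ for $x$ and then take the maximum.
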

\begin{proof}
Let $I = (a_1, \dots, a_n)$ be an ideal of $S$ and let $c\in IR\cap S$. Choose $j$ such that $a_1, \dots, a_n, c\in S_j$. Then we have
$c\in (a_1, \dots, a_n)R\cap S_j = (a_1, \dots, a_n)S_j \subseteq (a_1,\dots, a_n)S = I$, hence $c\in I$. We conclude $IR\cap S = I$. 
\end{proof}
\begin{lemma}\label{lemma:Arnosti-image-3}
Let $T, \Pc$ be as in \Cref{rmk:precon}. Let $J$ be an ideal of $T$ such that $J\not\subseteq \pf$ for all $\pf\in \Cc$ and let $u+J\in T/J$. If $R$ is a CIP-subring of $T$, then there exists a CIP-subring $S$ of $T$ such that:
\begin{enumerate}
    \item $R\subseteq S\subset T$;
    \item $u+J$ is in the image of $\pi: S\to T/J$;
    \item For every finitely generated ideal $I$ of $S$, we have $IT\cap S = I$.
\end{enumerate}
\end{lemma}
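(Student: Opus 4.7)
The proof plan has two phases. In the first phase, I enlarge $R$ to a CIP-subring $R_0$ of $T$ containing an element $y$ with $y + J = u + J$. In the second phase, I iteratively enlarge $R_0$ to obtain a CIP-subring $S$ that additionally satisfies the finitely generated ideal intersection condition.

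For the first phase, because $J \not\subseteq \pf$ for any $\pf \in \Cc$, the coset $u + J$ meets each complement $T \setminus \pf$, and I have genuine freedom in the choice of $t \in J$ used to form $y := u + t$. The goal is to choose $t$ so that the four conditions of \Cref{defn:IP} persist for the localized ring $R_0 := R[y]_{(R[y] \cap \mf)}$. Each of conditions (2), (3), (4) translates into a countable set of avoidance constraints on $t$ modulo the primes in $\Cc$, arising from the countably many elements of $R$ whose intersection behavior with each $\pf$ must be preserved and from a countable list of annihilator conditions. Since $R$ is countable while $|T/\pf| \geq |\R|$ for every non-maximal $\pf$ by \Cref{lemma:cardinality-dim} (applied using $\dim T \geq 1$ from \Cref{rmk:precon}), a cardinality argument produces an admissible $t \in J$; then $R_0$ is a CIP-subring of $T$ with $\pi(y) = u + J$.

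For the second phase, enumerate all pairs $(I, c)$ with $I$ a finitely generated ideal of some intermediate ring and $c \in IT$, which is possible because each intermediate ring is countable. Build an ascending chain $R_0 \subseteq R_1 \subseteq R_2 \subseteq \cdots$ of CIP-subrings of $T$ such that at stage $i+1$ the pair $(I_i, c_i)$ is handled in the sense that if $c_i \in I_i T \cap R_i$, then $c_i \in I_i R_{i+1}$. Each step requires adjoining finitely many elements of $T$ witnessing a relation $c_i = \sum a_j b_j$ with $a_j \in I_i$, and the cardinality argument of the first phase ensures each adjunction can be carried out while preserving the CIP-subring property. Setting $S := \bigcup_i R_i$, \Cref{lemma:Arnosti-unioning} gives that $S$ is a CIP-subring of $T$, and the bookkeeping together with \Cref{lemma:upwards-closed} gives $IT \cap S = I$ for every finitely generated ideal $I$ of $S$. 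The inclusion $S \subset T$ is strict because $S$ is countable while $T$ is uncountable.

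The main obstacle is the cardinality/avoidance argument in the first phase and its iteration in the second. Condition (3) of \Cref{defn:IP} is the most delicate, since it couples the values of $y$ modulo different primes $\pf \in \Cc$ according to the partition structure $\{\Cc_i\}$, forcing a coordinated rather than coordinate-wise choice of $t$. I expect the argument to parallel the key SIP-subring steps of \cite{Arnosti}, with the inequality $|R| < |T|$ used there replaced throughout by $|R| = \aleph_0 < |T|$, which remains valid because $|T| \geq |\R|$ by \Cref{lemma:cardinality-dim}.
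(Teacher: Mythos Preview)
Your two-phase outline is correct and is essentially the argument underlying Lemma~3.8 of \cite{Arnosti}, which the paper simply cites. The paper's own proof is a two-line reduction: since $\dim T\ge 1$, \Cref{lemma:cardinality-dim} gives $|T|\ge|\R|$, so a CIP-subring is automatically an SIP-subring; Lemma~3.8 of \cite{Arnosti} then supplies an SIP-subring $S$ with the three listed properties \emph{and} $|S|=|R|$, hence $S$ is countable and thus a CIP-subring. Your approach instead unpacks that citation, which is fine and matches what the paper later does in the CGIP setting (\Cref{lemma:Arnosti-image-3new}): choose $t\in J$ so that $u+t+\pf$ is transcendental over $R/(\pf\cap R)$ for every $\pf\in\Cc$ (this is the precise form of your ``countable avoidance'' argument, via \Cref{lemma:Arnosti-transcnew}), then iteratively close up ideals.

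One small point: your invocation of \Cref{lemma:upwards-closed} at the end of Phase~2 is not quite right. That lemma assumes $IT\cap S_i=I$ for each $i$, but your chain only guarantees $c_i\in I_iR_{i+1}$, not closure at each fixed level. The standard fix (visible in the proof of \Cref{lemma:Arnosti-image-3new}) is a double iteration: first close $R_0$ to $S_1$ so that $IT\cap R_0\subseteq IS_1$ for all finitely generated $I$ of $R_0$, then repeat with $S_1$ in place of $R_0$, and so on; the union $S=\bigcup S_j$ then satisfies $IT\cap S=I$ by the argument you sketched, picking $j$ large enough to contain $c$ and the generators of $I$. Also, your enumeration of pairs $(I,c)$ over ``some intermediate ring'' is circular as written; the double iteration resolves this bookkeeping issue cleanly.
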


\begin{proof}
By Lemma \ref{lemma:cardinality-dim}, $T$ is uncountable and so $R$ is an SIP-subring of $T$.  By Lemma 3.8 in \cite{Arnosti}, there is an SIP-subring $S$ of $T$ satisfying all three conditions of the lemma as well as the condition that $|S| = |R|$. Hence, $S$ is the desired CIP-subring of $T$.
\end{proof}
%
%
%
%
%
%
%
%
%
%
Now, a brief lemma that is alluded to in \cite{Arnosti}, but not proven directly.
\begin{lemma}\label{Lemma:Precompletion-Cover}
Let $T, \Pc$ be as in \Cref{rmk:precon}. Let $R$ be a CIP-subring of $T$ and suppose $T/\mf$ is countable. Then there exists a Noetherian CIP-subring $S$ of $T$ such that $R\subseteq S\subset T$ and $\widehat S = T$. 
\end{lemma}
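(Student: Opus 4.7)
The plan is to realize $S$ as an ascending union of CIP-subrings built by a countable induction, each step ensuring one more element of $T/\mf^2$ has a preimage, while also preserving the intersection condition on finitely generated ideals. Then the desired properties of $S$ follow from \Cref{proposition:the-machine}.

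\textbf{Setup.} By \Cref{lemma:cardinality-dim}, $T$ is uncountable, and by \Cref{lemma:cardinality-m-2}, $T/\mf^2$ is countable (since $T/\mf$ is countable). Enumerate $T/\mf^2 = \{u_1 + \mf^2, u_2 + \mf^2, \ldots\}$. Note that for every $\pf \in \Cc$, because $\pf$ is a non-maximal prime, we have $\mf \not\subseteq \pf$, hence $\mf^2 \not\subseteq \pf$; this is exactly the hypothesis needed to apply \Cref{lemma:Arnosti-image-3} with $J = \mf^2$.

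\textbf{Inductive construction.} Put $S_0 = R$. Given a CIP-subring $S_{i-1}$ of $T$, apply \Cref{lemma:Arnosti-image-3} with $J = \mf^2$ and $u = u_i$ to obtain a CIP-subring $S_i$ of $T$ with $S_{i-1} \subseteq S_i \subset T$, such that $u_i + \mf^2$ is in the image of the map $S_i \to T/\mf^2$, and such that $IT \cap S_i = I$ for every finitely generated ideal $I$ of $S_i$. Set $S = \bigcup_{i \geq 0} S_i$.

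\textbf{Verification.} By \Cref{lemma:Arnosti-unioning}, $S$ is a CIP-subring of $T$; in particular it is quasi-local with maximal ideal $S \cap \mf$, contains $R$, and is countable (hence properly contained in $T$). By construction, every $u_i + \mf^2$ lies in the image of $S \to T/\mf^2$, so this natural map is surjective. By \Cref{lemma:upwards-closed}, applied with the ambient ring being $T$ and the chain $S_0 \subseteq S_1 \subseteq \cdots$, we get $IT \cap S = I$ for every finitely generated ideal $I$ of $S$. Finally, \Cref{proposition:the-machine} yields that $S$ is Noetherian and $\widehat{S} = T$.

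\textbf{Main obstacle.} There is no real obstacle once the prior lemmas are in hand: the heart of the argument is already encoded in \Cref{lemma:Arnosti-image-3}, which simultaneously enlarges the subring to hit a chosen coset mod $J$ and preserves the intersection property for finitely generated ideals. The only point requiring mild care is confirming that $J = \mf^2$ meets the hypothesis $J \not\subseteq \pf$ for each $\pf \in \Cc$, and that the intersection property on each $S_i$ passes to the union via \Cref{lemma:upwards-closed}. After that, \Cref{proposition:the-machine} packages everything into the statement $\widehat{S} = T$.
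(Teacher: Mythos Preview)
Your proof is correct and follows essentially the same approach as the paper's: enumerate $T/\mf^2$, iteratively apply \Cref{lemma:Arnosti-image-3} with $J=\mf^2$ to build an ascending chain of CIP-subrings hitting each coset, take the union, and invoke \Cref{lemma:Arnosti-unioning}, \Cref{lemma:upwards-closed}, and \Cref{proposition:the-machine}. The only cosmetic differences are notation ($S_i$ versus $R_i$) and that you explicitly note $T$ is uncountable to justify $S\subsetneq T$.
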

\begin{proof}
To begin, let $R_0 = R$. By Lemma \ref{lemma:cardinality-m-2}, $T/\mf^2$ is countable. Index the elements of $T/\mf^2$ as $(u_i + \mf^2)_{i\in \Z^+}$. For each $i \geq 0$, let $R_{i+1}$ be the CIP-subring of $T$ from \Cref{lemma:Arnosti-image-3} using $R = R_i$, $J = \mf^2$, and $u=u_i$. Since $\mf^2$ is only contained in one prime ideal, the ideal $\mf$, it satisfies the hypothesis of the lemma. 
Also note that $R_i \subseteq R_{i + 1}$.  Now, define $S$ to be:
\[
S: = \bigcup_{i=0}^\infty R_i
\]
By Lemma \ref{lemma:Arnosti-unioning}, $S$ is a CIP-subring of $T$. 

To see that $S$ is Noetherian and $\hat{S}=T$, we use Proposition \ref{proposition:the-machine}. The map $S\to T/\mf^2$ is surjective by construction and by \Cref{lemma:upwards-closed} we have that $IT\cap S = I$ for all finitely generated ideals $I$ of $S$. We therefore conclude that $\hat{S} = T$.
\end{proof}
\begin{lemma}\label{lemma:CIP-existence}
Let $T, \Pc$ be as in \Cref{rmk:precon}. Suppose further that $T$ contains $\Q$. Then there exists a CIP-subring $S$ of $T$.
\end{lemma}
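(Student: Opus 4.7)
The plan is to adapt the construction of an SIP-subring from \cite{Arnosti} to the countable setting. Since $T$ contains $\Q$, the prime subring $\Q \subseteq T$ is already countable, and running the Arnosti-style enlargement starting from $\Q$ through countably many stages will yield a countable subring. Because $\dim T \geq 1$, \Cref{lemma:cardinality-dim} guarantees $|T| \geq |\R|$, so any countable subring automatically has cardinality less than $|T|$ and so is a candidate to become an SIP-subring (and hence a CIP-subring).

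Starting from $R_0 := \Q$, three of the four conditions of \Cref{defn:IP} hold automatically: $R_0$ is infinite; condition (2) is vacuous in our setting $\Cc = \Min(T)$; and condition (4) holds because $R_0 \cap \pf = (0)$ for every proper prime $\pf$ of $T$, while $\Ann_T(0) = T \not\subseteq \pf$. Only the ``only if'' direction of (3) fails, since $R_0 \cap \pf = (0)$ for every $\pf \in \Cc$. To repair (3), I would, for each ordered pair of distinct partition indices $(i, j)$, adjoin an element $t_{ij} \in \bigcap_{\pf \in \Cc_i} \pf \setminus \bigcup_{\pf' \in \Cc_j} \pf'$, using prime avoidance together with the incomparability of primes in $\Cc$: were no such element to exist, prime avoidance would force some $\pf \in \Cc_i$ to be contained in some $\pf' \in \Cc_j$, contradicting incomparability. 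The $t_{ij}$ may then require an adjustment by adding an element of $\bigcap_{\pf \in \Cc_i} \ker(T \to T_\pf)$, so that the modified element still lies outside every $\pf' \in \Cc_j$ while also having annihilator in $T$ not contained in any $\pf \in \Cc_i$, thereby securing condition (4) for the new generator.

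The main obstacle is to verify that conditions (1)--(4) continue to hold for \emph{all} elements of the subring $S$ generated by $\Q$ together with the adjoined elements, not merely for the generators themselves; the delicate point is that an arbitrary polynomial expression in the $t_{ij}$'s lying in some $\pf \in \Cc$ must itself satisfy the annihilator condition. This is precisely what the iterative Arnosti machinery manages: it alternates between adjoining new elements only when doing so is safe and closing under the necessary operations, taking the union of the resulting countable chain (which remains a CIP-subring by \Cref{lemma:Arnosti-unioning}). Since each stage adjoins at most countably many elements to a countable ring, the resulting $S$ is countable, so $S$ is the desired CIP-subring of $T$.
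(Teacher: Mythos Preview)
Your approach is essentially the same as the paper's: invoke the SIP-subring construction of \cite{Arnosti} (their Lemma~3.11) starting from $\Q$, and observe that the output is countable. The paper's version is shorter because it exploits a feature of that construction you did not use: the Arnosti Lemma~3.11 ring is obtained from $\Q$ by adjoining a single element and localizing, \emph{finitely many times}. Countability is then immediate, with no appeal to \Cref{lemma:Arnosti-unioning} or any countable chain.

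A word of caution about your sketch of the separating step. Adjoining elements $t_{ij}\in\bigcap_{\pf\in\Cc_i}\pf\setminus\bigcup_{\pf'\in\Cc_j}\pf'$ secures the ``only if'' direction of condition~(3), but as written it can break the ``if'' direction: for $\pf,\pf'\in\Cc_i$ and a generator $t_{kj}$ with $k\neq i$ and $j\neq i$, nothing prevents $t_{kj}\in\pf\setminus\pf'$. The actual Arnosti argument avoids this by choosing the adjoined elements so that their images in $T/\pf$ are transcendental over the current subring for every relevant $\pf$ (via the analogue of \Cref{lemma:Arnosti-transcnew}), which simultaneously preserves both directions of (3) and condition~(4) through \Cref{lemma:Arnosti-adjoining-lemma}. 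Since you explicitly defer to the Arnosti machinery for this, your overall argument stands; just be aware that the naive separating construction you describe is not itself sufficient.
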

\begin{proof}
By Lemma 3.11 in \cite{Arnosti}, there is an SIP-subring $S$ of $T$. In the proof of that lemma, the ring $S$ is constructed by starting with $\Q$, adjoining an element of $T$ and then localizing, and doing this process finitely many times.  It follows that $S$ is countable, and so it is a CIP-subring of $T$.
\end{proof}
%
%
%
%
%


%
%
%

We are now ready to construct our base ring.
\begin{proposition}\label{prop:base-ring}
Let $T, \Pc$ be as in \Cref{rmk:precon}. Suppose further that $T$ contains $\Q$ and $T/\mf$ is countable. Then $T$ has a Noetherian CIP-subring $S$ such that $\hat{S} = T$.
\end{proposition}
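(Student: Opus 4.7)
The plan is to combine the two preceding existence lemmas directly. Since $T$ contains $\Q$, \Cref{lemma:CIP-existence} immediately produces a CIP-subring $R$ of $T$ with respect to the partition $\Pc$. This handles the seed of the construction: $R$ is countable, infinite, contains $\Q$, and witnesses the required intersection and annihilator conditions at each prime in $\Cc$.

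Next I would feed this $R$ into \Cref{Lemma:Precompletion-Cover}. Because $T/\mf$ is countable by hypothesis, the lemma applies and yields a Noetherian CIP-subring $S$ of $T$ with $R \subseteq S \subset T$ and $\widehat{S} = T$. This $S$ is the desired base ring, and no further work is needed beyond invoking the two lemmas in sequence.

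The argument is a chaining step, not a construction step in its own right; all the genuine difficulty (the iterated application of \Cref{lemma:Arnosti-image-3} to hit every class in $T/\mf^2$, the closure under countable unions via \Cref{lemma:Arnosti-unioning}, and the use of \Cref{proposition:the-machine} to upgrade to $\widehat{S}=T$) has already been carried out in \Cref{Lemma:Precompletion-Cover}. There is no genuine obstacle here; the only thing worth verifying is that the hypotheses of the two cited lemmas line up with the hypotheses of the proposition (namely that $T$ is complete local with $\dim T \geq 1$, contains $\Q$, has countable residue field, and $\Pc$ is a feasible partition of a finite collection of incomparable non-maximal primes), which they do verbatim.
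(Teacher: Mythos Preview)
Your proposal is correct and matches the paper's proof essentially verbatim: invoke \Cref{lemma:CIP-existence} to obtain a CIP-subring $R$, then apply \Cref{Lemma:Precompletion-Cover} to $R$ to obtain the Noetherian CIP-subring $S$ with $\widehat{S}=T$. There is nothing to add.
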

\begin{proof}
We first construct a CIP-subring $R$ of $T$ using \Cref{lemma:CIP-existence}. We then use \Cref{Lemma:Precompletion-Cover} on $R$ to produce a Noetherian CIP-subring $S$ of $T$ such that $\hat{S} = T$. 
\end{proof}

We end this section with an interesting corollary of Proposition \ref{prop:base-ring}.

\begin{corollary}\label{notexcellent}
Let $(T,\mf)$ be a complete local ring containing the rationals.  Suppose further that $\dim T\geq 1$ and $T/\mf$ is countable.  Let $k$ be a positive integer such that $k \leq |\Min(T)|$.  Then there is a countable local ring $A$ such that $\widehat{A} = T$ and $|\Min(A)| = k$.
\end{corollary}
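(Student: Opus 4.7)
The plan is to apply \Cref{prop:base-ring} with a carefully chosen partition of $\Min(T)$ into exactly $k$ blocks, and then to check that the resulting CIP-subring has exactly $k$ minimal primes.

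First, since $\dim T\geq 1$ every minimal prime of $T$ is non-maximal, so $\Cc := \Min(T)$ is a finite collection of pairwise incomparable non-maximal primes. Using $k\leq |\Min(T)|$, I would partition $\Cc$ into $k$ nonempty blocks $\Cc_1,\dots,\Cc_k$; because $\Cc = \Min(T)$, the partition $\Pc = \{\Cc,\{\Cc_i\}_{i=1}^{k}\}$ is automatically feasible. \Cref{prop:base-ring} then produces a Noetherian CIP-subring $A$ of $T$ with $\widehat A = T$, and countability of CIP-subrings together with $A$ being Noetherian and having the one maximal ideal $A\cap \mf$ ensures that $A$ is a countable local ring in the paper's sense.

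It remains to show $|\Min(A)| = k$. By condition~(3) of the IP-subring definition, the contractions $\pf\cap A$ for $\pf\in\Min(T)$ yield exactly $k$ distinct primes $\qf_1,\dots,\qf_k$ of $A$, one per block. Since $T = \widehat A$ is faithfully flat over $A$, the map $\Spec(T)\to\Spec(A)$ is surjective and going-down holds. Surjectivity combined with going-down forces every minimal prime of $A$ to be the contraction of a minimal prime of $T$, which gives $\Min(A)\subseteq \{\qf_1,\dots,\qf_k\}$.

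The main (and really only) step that needs care is the reverse containment, and this is where I expect the slight obstacle. I plan to handle it by showing that the $\qf_i$ are pairwise incomparable: if $\qf_i\subsetneq \qf_j$ for some $i\ne j$, pick $\pf\in \Cc_j$ with $\pf\cap A = \qf_j$; going-down applied to $\qf_i\subsetneq \pf\cap A$ yields $\pf'\subseteq \pf$ with $\pf'\cap A=\qf_i$, and minimality of $\pf$ in $T$ forces $\pf'=\pf$, hence $\qf_i=\qf_j$, a contradiction. Once the $\qf_i$ are known to be incomparable, the inclusion $\Min(A)\subseteq \{\qf_1,\dots,\qf_k\}$ forces each $\qf_i$ to itself be minimal in $A$, and therefore $|\Min(A)|=k$, completing the proof.
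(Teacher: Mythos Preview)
Your proposal is correct and follows essentially the same approach as the paper: choose any partition of $\Min(T)$ into $k$ blocks, invoke \Cref{prop:base-ring} to get a Noetherian CIP-subring $A$ with $\widehat A=T$, and use faithful flatness (going-down and surjectivity of $\Spec T\to\Spec A$) to identify $\Min(A)$ with the $k$ contraction classes. The paper's write-up is terser---it simply asserts that contractions of minimal primes of $T$ are minimal in $A$ and that every minimal prime of $A$ arises this way---whereas you unpack the first assertion via an incomparability argument, but the underlying mechanism is identical.
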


\begin{proof}
Let $\Pc = \{\Cc, \{\Cc_1, \dots, \Cc_k\}\}$ be any partition of $\Min(T)$ and note that $\Pc$ is a feasible partition.  Let $A$ be the Noetherian CIP-subring of $T$ given by Proposition \ref{prop:base-ring}.  Then $A$ is countable and $\widehat{A} = T$.  Since $T$ is a faithfully flat extension of $A$, if $\pf$ is a minimal prime ideal of $T$, then $\pf \cap A$ is a minimal prime ideal of $A$.  Moreover, given a minimal prime ideal $\qf$ of $A$, there is a minimal prime ideal $\pf$ of $T$ such that $\qf = \pf \cap A$.  It follows that, since $A$ is a IP-subring of $T$ and the partition $\Pc$ has $k$ elements, $|\Min(A)| = k$.
\end{proof}

In the proof of the previous corollary, it is shown that there is a surjective map from $\Min(T)$ to $\Min(A)$.  It is interesting to note that there is no restriction on the fibers of this map. That is, $\Pc$ can be {\em any} partition of the minimal prime ideals of $T$, and the fibers of the map from $\Min(T)$ to $\Min(A)$ will be exactly the elements of $\Pc$.  

\section{Countable Excellent Rings}\label{section-4}
The ring $A$ given in Corollary \ref{notexcellent} satisfies all properties we want $A$ to satisfy in the first part of Theorem \ref{Thm:ctbl-excellent-gen} with the exception that $A$ is not excellent.  In this section, we construct an {\em excellent} ring that satisfies all of our desired properties.  To do this, we start with the base ring given by Proposition \ref{prop:base-ring}, and we adjoin countably many elements from $T$.  In particular, we adjoin generating sets from carefully chosen prime ideals of $T$, which will allow us to conclude that our final ring is excellent. We begin by recalling the definition of nonsingular and singular prime ideals of a ring.

\begin{definition}
\label{defn:nonsingularnew}
We say that a prime ideal $\pf$ of the ring $T$ is \textbf{nonsingular} if $T_\pf$ is a regular local ring. Otherwise we say that $\pf$ is \textbf{singular}.
\end{definition}

To construct our final ring, we treat nonsingular minimal prime ideals of $T$ slightly differently than singular ones.  In particular, we modify the definition of intersection-preserving subring from Section \ref{section-3} as follows.

\begin{definition}\label{defn:GIPNew}
Let $(T,\mf)$ be a complete local ring with $\dim T \geq 1$. Let $\Pc = \{\Cc_1, \dots, \Cc_n\}$ be a partition on $\Min(T)$ such that if $\pf\in \Cc_i$ for some $i \in \{1,2, \ldots ,n\}$ and $\pf$ is singular then $\Cc_i = \{\pf\}$. Let $(S, S\cap \mf)$ be a quasi-local subring of $T$.  If the following three conditions hold, we call $S$ a \textbf{generalized intersection-preserving subring} (abbreviated GIP-subring) of $T$.
\begin{enumerate}
    \item $S$ is infinite.
    \item \label{twonew} If $\pf,\pf' \in \Min(T)$, then $\pf \cap S = \pf' \cap S$ if and only if there is an $i \in \{1,2, \ldots ,n\}$ such that $\pf \in \Cc_i$ and $\pf' \in \Cc_i$.
    \item \label{threenew} If $\pf\in \Min(T)$ is nonsingular and $a\in\pf \cap S$, then $\Ann_T(a)\not\subseteq \pf$.
\end{enumerate}
If $S$ is also countable, then we call $S$ a \textbf{countable generalized intersection-preserving subring} (abbreviated CGIP-subring) of $T$.
\end{definition}

Note that if $\Pc = \{\Cc_1, \dots, \Cc_n\}$ is a partition on $\Min(T)$, then any IP-subring of $T$ is also a GIP-subring of $T$ and any CIP-subring of $T$ is also a CGIP-subring of $T$.

\begin{remark}
In this paper, the tools developed for GIP-subrings are used to grow a ring while maintaining the structure of the ring's minimal prime ideals. In particular, we begin with a pair of local rings $S,T$ such that $\hat{S} = T$. For a ring extension $S\subseteq R\subseteq T$ (this is the only setting in which we will be considering GIP-subrings), condition \ref{twonew} of Definition \ref{defn:GIPNew} is equivalent to the statement that the restriction of the map $\Spec(R)\to \Spec(S)$ to $\Min(R)\to \Min(S)$ is bijective. Though this is a more helpful framework for visualizing the structure of the ring inclusions and for discovering new results, it is less practical for the task of formalizing results.
\end{remark}

In constructing our final ring, Lemma \ref{localize} is very useful. It states that if $S$ is a countably infinite subring of a complete local ring $T$ and $u \in T$ is chosen so that $S[u]$ satisfies conditions \ref{twonew} and \ref{three} of Definition \ref{defn:GIPNew}, then the ring $R = S[u]_{(S[u] \cap \mf)}$ is a CGIP-subring of $T$.

\begin{lemma}\label{localize}
Let $(T,\mf)$ be a complete local ring with $\dim T \geq 1$. Let $\Pc = \{\Cc_1, \dots, \Cc_n\}$ be a partition on $\Min(T)$ such that if $\pf\in \Cc_i$ for some $i \in \{1,2, \ldots ,n\}$ and $\pf$ is singular then $\Cc_i = \{\pf\}$.  Suppose $S$ is a countably infinite subring of $T$ and $u \in T$ such that $S[u]$ satisfies conditions \ref{twonew} and \ref{threenew} of Definition \ref{defn:GIPNew}.  Then $R = S[u]_{(S[u] \cap \mf)}$ is a CGIP-subring of $T$.
\end{lemma}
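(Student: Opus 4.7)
My plan is to verify each of the three clauses of Definition \ref{defn:GIPNew} for $R = S[u]_{(S[u] \cap \mf)}$, along with the countability condition, by transferring the corresponding properties from $S[u]$ to its localization. First I would establish that $R$ is a quasi-local subring of $T$ whose maximal ideal equals $R \cap \mf$: since $\mf$ is prime in $T$, $S[u] \cap \mf$ is prime in $S[u]$, so $R$ is a local ring; and since every $b \in S[u] \setminus \mf$ is a unit in $T$, the ring $R$ may be realized inside $T$, whose maximal ideal consists of fractions $r/b$ with $r \in S[u] \cap \mf$, which is exactly $R \cap \mf$. Countability of $R$ is immediate, since $S[u]$ is a countable polynomial ring over the countable ring $S$ and localization preserves countability; and since $S \subseteq R$ with $S$ infinite, condition (1) of the definition is satisfied.

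The key step is condition (\ref{twonew}). For any $\pf \in \Min(T)$, every $b \in S[u] \setminus \mf$ is a unit of $T$ and hence does not lie in $\pf$, so the multiplicative set $W = S[u] \setminus (S[u] \cap \mf)$ used to form $R$ is disjoint from $\pf \cap S[u]$. Standard localization theory then gives $\pf \cap R = W^{-1}(\pf \cap S[u]) = (\pf \cap S[u])R$, and the contraction of this ideal back to $S[u]$ recovers $\pf \cap S[u]$. It follows that $\pf \cap R = \pf' \cap R$ if and only if $\pf \cap S[u] = \pf' \cap S[u]$, and by the hypothesis that $S[u]$ itself satisfies condition (\ref{twonew}), this is equivalent to $\pf$ and $\pf'$ lying in the same block $\Cc_i$.

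For condition (\ref{threenew}), suppose $\pf \in \Min(T)$ is nonsingular and $a \in \pf \cap R$. Write $a = r/b$ with $r \in S[u]$ and $b \in S[u] \setminus \mf$. Since $b$ is a unit in $T$, we have $r = b a \in \pf$, so $r \in \pf \cap S[u]$, and multiplication by the unit $b$ yields $\Ann_T(a) = \Ann_T(r)$. By the hypothesis that $S[u]$ satisfies condition (\ref{threenew}), $\Ann_T(r) \not\subseteq \pf$, so $\Ann_T(a) \not\subseteq \pf$ as required.

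I do not expect any real obstacle: the lemma is essentially a packaging result that transfers the GIP-conditions from $S[u]$ to its localization at a carefully chosen prime. The only point requiring any care is ensuring that the minimal primes of $T$ interact correctly with the denominators, and this is guaranteed by the observation that elements of $T \setminus \mf$ are units of $T$ and therefore avoid every $\pf \in \Min(T)$.
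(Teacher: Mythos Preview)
Your proposal is correct and follows essentially the same approach as the paper's proof: both verify conditions (1)--(3) of Definition~\ref{defn:GIPNew} for $R$ by writing an element of $R$ as a fraction $f/g$ with $g$ a unit of $T$ and reducing to the corresponding condition already assumed for $S[u]$. Your version is in fact slightly more careful in that you explicitly verify that $R$ embeds in $T$ and has maximal ideal $R\cap\mf$, a point the paper leaves implicit; the only minor slips are terminological (the localization is a priori only \emph{quasi}-local, and $S[u]$ need not literally be a polynomial ring, only a quotient of one), neither of which affects the argument.
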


\begin{proof}
Since $S$ is countable and infinite, so is $R$.  It remains to show that conditions \ref{twonew} and \ref{threenew} of Definition \ref{defn:GIPNew} hold for $R$.  Suppose $\pf,\pf' \in \Min(T)$ with $\pf \cap R = \pf' \cap R$. Then $\pf \cap S[u] = \pf' \cap S[u]$ and so there is an $i \in \{1,2, \ldots ,n\}$ such that $\pf \in \Cc_i$ and $\pf' \in \Cc_i$. Conversely, suppose there is an $i \in \{1,2, \ldots ,n\}$ such that $\pf \in \Cc_i$ and $\pf' \in \Cc_i$.  Then $\pf \cap S[u] = \pf' \cap S[u]$.  If $f/g \in \pf \cap R$, then $f \in \pf \cap S[u] = \pf' \cap S[u]$ and so $f/g \in \pf' \cap R$. It follows that $\pf \cap R \subseteq \pf' \cap R$.  Containment the other way follows similarly.  Finally, let $\pf \in \Min(T)$ be nonsingular, and let $f/g \in \pf \cap R$ where $f,g \in S[u]$.  Then $f \in \pf \cap S[u]$ and so $\Ann_T(f)\not\subseteq \pf$.  It follows that $\Ann_T(f/g)\not\subseteq \pf$.
\end{proof}

Our next step is to find sufficient conditions on $u \in T$ so that, if $S$ is a countably infinite subring of a complete local ring $T$, then $S[u]$ satisfies conditions \ref{twonew} and \ref{three} of Definition \ref{defn:GIPNew}.  The next two lemmas, taken from \cite{Arnosti}, help us do so.

\begin{lemma}[\cite{Arnosti}, Lemma 3.3 ]\label{lemma:Arnosti-adjoining-lemma}
Let $R$ be a subring of the complete local ring $T$. Suppose that $\pf, \qf\in \Spec(T)$ such that $\pf\cap R = \qf\cap R$. Let $u\in T$ and suppose that $u+\pf\in T/\pf$ is transcendental over $R/(R \cap \pf)$ and likewise $u + \qf \in T/\qf$ is transcendental over $R/(R \cap \qf)$. Then $\pf\cap R[u] = \qf\cap R[u]$ and if $\Ann_T(a)\not\subseteq \pf$ for all $a\in R\cap \pf$, then $\Ann_T(f)\not\subseteq \pf$ for all $f\in R[u]\cap \pf$.
\end{lemma}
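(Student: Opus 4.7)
The plan is to establish the two conclusions separately, both by writing each element of $R[u]$ as a polynomial $\sum_{i=0}^n r_i u^i$ in $u$ with coefficients in $R$ and exploiting the transcendence hypothesis to control the coefficients.

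For the equality $\pf \cap R[u] = \qf \cap R[u]$, by symmetry it suffices to prove one containment. I would take $f = \sum_{i=0}^n r_i u^i \in \pf \cap R[u]$ and reduce modulo $\pf$ to obtain $\sum_{i=0}^n (r_i + \pf)(u + \pf)^i = 0$ in $T/\pf$. Viewing $R/(R \cap \pf)$ as a subring of $T/\pf$ via the natural injection $r + (R \cap \pf) \mapsto r + \pf$, the transcendence of $u + \pf$ over $R/(R \cap \pf)$ forces every coefficient $r_i + \pf$ to vanish in $T/\pf$, i.e., $r_i \in R \cap \pf$. The hypothesis $R \cap \pf = R \cap \qf$ then places each $r_i$ in $\qf$, whence $f \in \qf \cap R[u]$. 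The reverse containment is obtained by swapping the roles of $\pf$ and $\qf$ and invoking the transcendence of $u + \qf$ over $R/(R \cap \qf)$.

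For the annihilator conclusion, given $f = \sum_{i=0}^n r_i u^i \in R[u] \cap \pf$, the coefficient argument just outlined shows that every $r_i$ already lies in $R \cap \pf$. The standing hypothesis then supplies, for each $i$, an element $t_i \in \Ann_T(r_i) \setminus \pf$. Setting $t := t_0 t_1 \cdots t_n$, one has $t \notin \pf$ because $\pf$ is prime, while $t r_i = 0$ for every $i$ since $t_i r_i = 0$ and the remaining factors commute past. Hence $tf = \sum_{i=0}^n (t r_i) u^i = 0$, which exhibits an element of $\Ann_T(f) \setminus \pf$ and shows $\Ann_T(f) \not\subseteq \pf$.

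The only conceptual point is translating the transcendence hypothesis into the statement that a polynomial expression in $u$ with $R$-coefficients can belong to $\pf$ only if each coefficient already does; once that observation is in hand, both claims reduce to elementary manipulations using that $\pf$ is prime. I therefore do not expect any serious obstacle: the argument is essentially bookkeeping around the single substantive idea of algebraic independence modulo $\pf$.
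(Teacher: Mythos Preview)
Your argument is correct and is precisely the standard one: transcendence of $u+\pf$ forces all coefficients of an element of $R[u]\cap\pf$ into $R\cap\pf$, after which both conclusions are immediate. The paper does not supply its own proof of this lemma---it is quoted verbatim from \cite{Arnosti} (Lemma~3.3) and used as a black box---so there is nothing to compare against beyond noting that your proof is the natural one and would serve in place of the citation.
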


\begin{lemma}[\cite{Arnosti}, Lemma 3.5]\label{lemma:Arnosti-transcnew}
Let $(T,\mf)$ be a complete local ring of positive dimension and suppose $T$ contains the rationals. Let $R$ be a subring of $T$ such that $|R|<|T|$ and let $C$ be a finite set of incomparable non-maximal prime ideals of $T$. Let $J$ be an ideal of $T$ not contained in any prime of $C$. Let $t,q\in T$. Then there exists an element $t'\in J$ such that, for every $\pf\in C$ with $q\notin \pf$, we have $t+qt'+\pf\in T/\pf$ is transcendental over $R/(\pf\cap R)$. In addition, if $\qf\in \Min(T), \pf\in C, \qf\subset \pf, q\notin \pf$ and $R\cap \pf = R\cap \qf$, then $t+qt'+\qf\in T/\qf$ is transcendental over $R/(R\cap \qf)$.
\end{lemma}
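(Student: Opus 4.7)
My strategy is a finite induction on the primes of $C$ together with a cardinality argument in the spirit of the rest of \cite{Arnosti}. First I would dispose of the ``In addition'' clause by a quotient reduction: if $\qf \subset \pf$ with $\pf \in C$, $q \notin \pf$, and $R \cap \qf = R \cap \pf$, then the natural surjection $\phi : T/\qf \twoheadrightarrow T/\pf$ restricts to the identity on the common subring $R/(R \cap \qf) = R/(R \cap \pf)$. Any nonzero polynomial realizing $t + qt' + \qf$ as algebraic over $R/(R \cap \qf)$ would, under $\phi$, realize $t + qt' + \pf$ as algebraic over $R/(R \cap \pf)$. So it suffices to produce $t' \in J$ such that $t + qt' + \pf$ is transcendental over $R/(R \cap \pf)$ for every $\pf \in C$ with $q \notin \pf$.

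Enumerate those primes as $\pf_1, \dots, \pf_n$. I would inductively construct $t'_0 = 0, t'_1, \dots, t'_n \in J$ so that (i) $t + qt'_i + \pf_j$ is transcendental over $R/(R \cap \pf_j)$ for every $j \leq i$, and (ii) $t'_i - t'_{i-1} \in J \cap \pf_1 \cap \cdots \cap \pf_{i-1}$. Condition (ii) forces $qt'_i \equiv qt'_{i-1} \pmod{\pf_j}$ for $j < i$, so the transcendence achieved at earlier primes is automatically preserved, and $t' := t'_n$ gives the desired element.

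The heart of the argument is the inductive step. Setting $J_i := J \cap \pf_1 \cap \cdots \cap \pf_{i-1}$ (with $J_1 := J$), I need $s \in J_i$ so that $(t + qt'_{i-1}) + qs + \pf_i$ is transcendental over $R/(R \cap \pf_i)$ in $T/\pf_i$. A standard prime-avoidance argument using the incomparability of the primes of $C$ together with $J \not\subseteq \pf_i$ shows $J_i \not\subseteq \pf_i$: otherwise multiplying elements of $J \setminus \pf_i$ and $\pf_j \setminus \pf_i$ for $j < i$ would force some $\pf_j$ or $J$ into $\pf_i$. Hence $(J_i + \pf_i)/\pf_i$ is a nonzero ideal of the complete local domain $T/\pf_i$ (which has positive dimension because $\pf_i$ is non-maximal); multiplication by any nonzero element of this domain is injective, so by \Cref{lemma:cardinality-dim} this ideal has cardinality $|T|$. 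Since multiplication by $q + \pf_i$ is likewise injective in $T/\pf_i$, the element $(t + qt'_{i-1}) + qs + \pf_i$ takes $|T|$ distinct values as $s$ ranges over $J_i$. On the other hand, the set of elements of $T/\pf_i$ algebraic over $R/(R \cap \pf_i)$ has cardinality at most $\max(|R|, \aleph_0) < |T|$, since there are at most that many nonzero polynomials and each has finitely many roots in a domain. A good $s$ therefore exists.

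The main obstacle I expect is the simultaneous transcendence requirement at several primes with potentially different contractions to $R$. The device that resolves it is the additive correction by an element of $J \cap \pf_1 \cap \cdots \cap \pf_{i-1}$, which is genuinely flexible modulo $\pf_i$ precisely because of the incomparability of $C$ and because $J$ avoids every $\pf \in C$; without either hypothesis the intersection $J_i$ could collapse into $\pf_i$ and the induction would stall. Once this structural point is in hand, the cardinality inequality $|R| < |T|$ does the rest.
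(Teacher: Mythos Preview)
The paper does not actually prove this lemma; it is quoted verbatim as \cite{Arnosti}, Lemma~3.5, and used as a black box. So there is no in-paper argument to compare against.

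Your proposal is correct and is the expected argument for a result of this type. The reduction of the ``in addition'' clause via the surjection $T/\qf \twoheadrightarrow T/\pf$ together with $R\cap\qf = R\cap\pf$ is exactly right: a nontrivial algebraic relation for $t+qt'+\qf$ would push forward to one for $t+qt'+\pf$. The inductive scheme, adjusting $t'$ by an element of $J\cap\pf_1\cap\cdots\cap\pf_{i-1}$ so as not to disturb transcendence at the earlier primes, is the standard device and your prime-avoidance verification that $J_i\not\subseteq\pf_i$ is clean. The cardinality step is also fine: since $T$ contains $\Q$, any subring $R$ contains $\Z$ and is infinite, so $\max(|R|,\aleph_0)=|R|<|T|$; and \Cref{lemma:cardinality-dim} plus injectivity of multiplication by the nonzero element $q+\pf_i$ in the domain $T/\pf_i$ gives $|T|$ candidate values of $(t+qt'_{i-1})+qs+\pf_i$. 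This is essentially the argument one finds in \cite{Arnosti}.
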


Recall that, for the countable excellent local ring we construct, we want its completion to be our given complete local ring $T$.  To make this happen, we use Proposition \ref{proposition:the-machine}.  In particular, if $A \subseteq T$ is our final ring, we want to ensure that $IT \cap A = I$ for all finitely generated ideals $I$ of $A$.  Lemma \ref{lemma:CGIP-close-upnew} is a first step for doing this.

\begin{lemma}\label{lemma:CGIP-close-upnew}
Let $(T,\mf)$ be a complete local ring containing the rationals and suppose $\dim T \geq 1$. Let $\Pc = \{\Cc_1, \dots, \Cc_n\}$ be a partition on $\Min(T)$ such that if $\pf\in \Cc_i$ for some $i \in \{1,2, \ldots ,n\}$ and $\pf$ is singular then $\Cc_i = \{\pf\}$. Let $(S,S \cap \mf)$ be a CGIP-subring of $T$.  Suppose $I$ is a finitely generated ideal of $S$.  If $c \in IT \cap S$, there exists a CGIP-subring $R$ of $T$ such that $S \subseteq R$ and $c \in IR$.
\end{lemma}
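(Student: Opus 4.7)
The plan is to construct $R$ by adjoining $m$ carefully chosen elements $u_1, \ldots, u_m \in T$ to $S$ (localizing at the maximal ideal after each step via \Cref{localize}) in such a way that $c = u_1 a_1 + \cdots + u_m a_m$, which gives $c \in IR$. Starting from $c = \sum_{i=1}^m t_i a_i$ with $t_i \in T$ (possible since $c \in IT$), I will produce $u_i = t_i + \delta_i$ where $(\delta_1, \ldots, \delta_m)$ is a syzygy of $(a_1, \ldots, a_m)$ in $T$, so that $\sum u_i a_i = c$ holds automatically. The syzygy module contains both trivial syzygies $s\,e_j$ with $s \in \Ann_T(a_j)$ and ``Koszul'' relations $r(a_i e_j - a_j e_i)$, and this flexibility is exploited to invoke \Cref{lemma:Arnosti-transcnew} at each adjunction.

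More precisely, set $R_0 = S$ and $R_j = R_{j-1}[u_j]_{(R_{j-1}[u_j] \cap \mf)}$. At step $j$, I select the syzygy component so that $u_j + \pf$ is transcendental over $R_{j-1}/(R_{j-1} \cap \pf)$ for each nonsingular $\pf \in \Min(T)$ at which transcendence is achievable. For a nonsingular $\pf \in \Min(T)$ containing some $a_k$, note that $T_\pf$ is a field (since $\pf$ is minimal and nonsingular), hence $\Ann_T(a_k) \not\subseteq \pf$; this supplies the ideal $J$ required by \Cref{lemma:Arnosti-transcnew} via the trivial syzygy in position $k$. For a nonsingular $\pf$ containing no $a_k$, the Koszul syzygy $r(a_i e_j - a_j e_i)$ produces a perturbation $r a_i$ in position $j$ that is non-zero modulo $\pf$ for $a_i \notin \pf$, again providing the ingredient for \Cref{lemma:Arnosti-transcnew} with $q = a_i$.

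The CGIP conditions for $R_j$ are then verified as follows. Condition (i) is immediate; condition (iii) is automatic because $T_\pf$ is a field for every nonsingular $\pf \in \Min(T)$, so every element of $\pf$ has annihilator outside $\pf$. For condition (ii), if $\pf, \pf' \in \Cc_i$ are both nonsingular and $u_j$ is transcendental modulo both (note $a_k \in \pf \iff a_k \in \pf'$ for $a_k \in S$, since $\pf \cap S = \pf' \cap S$, so the casework is symmetric in $\pf$ and $\pf'$), \Cref{lemma:Arnosti-adjoining-lemma} lifts the equality $\pf \cap R_{j-1} = \pf' \cap R_{j-1}$ to $\pf \cap R_j = \pf' \cap R_j$. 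Singular primes live in singletons of $\Pc$ by hypothesis and pose no difficulty. \Cref{localize} then confirms that the localization is itself a CGIP-subring.

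The main obstacle I expect is handling the pairs $\pf, \pf'$ in the same class of $\Pc$ for which the equation $\sum u_i a_i = c$ forces certain $u_j$'s modulo $\pf$ (and $\pf'$), blocking a direct appeal to \Cref{lemma:Arnosti-adjoining-lemma}. In this case I argue directly: a polynomial relation $g(u_1, \ldots, u_m) \in \pf$ can be multiplied by a sufficiently high power of some $a_i \notin \pf$ and the identity $a_i u_i = c - \sum_{k \neq i} u_k a_k$ used to substitute away $u_i$, producing (after accounting for the transcendence already arranged for the remaining $u_k$'s via the earlier steps) an element of $R_{j-1}$ lying in $\pf \cap R_{j-1} = \pf' \cap R_{j-1}$. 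Since $a_i$ is a non-zero-divisor modulo $\pf'$ (a prime of the domain $T/\pf'$), clearing the factor yields $g(u_1, \ldots, u_m) \in \pf'$, as required. Setting $R = R_m$ gives the desired CGIP-subring containing $S$ with $c \in IR$.
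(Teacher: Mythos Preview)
Your approach is the paper's, repackaged: both perturb the $t_j$ by annihilator syzygies and Koszul syzygies so as to invoke \Cref{lemma:Arnosti-transcnew}, and both fall back on a multiply-by-$a^N$ substitution when transcendence cannot be forced. The paper inducts on $m$ and, in the inductive step, first rearranges the generators so that $a_1\in\pf \Leftrightarrow a_2\in\pf$ for every $\pf\in\Min(T)$; a single perturbation $u=t_1+qt'+t''a_2$ is then transcendental modulo \emph{every} nonsingular minimal prime, so the substitution trick is needed only in the base case $m=1$. You skip this rearrangement and therefore must lean on the substitution argument at every step. Your observation that condition~(iii) is automatic (since $T_\pf$ is a field for nonsingular $\pf\in\Min(T)$, any $a\in\pf$ has $\Ann_T(a)\not\subseteq\pf$) is a genuine simplification the paper does not exploit.

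There is, however, a real gap in your ``main obstacle'' paragraph. At step $j$ the identity $a_j u_j = c - \sum_{k\neq j} u_k a_k$ involves the not-yet-adjoined $u_{j+1},\ldots,u_m$, so the substitution does \emph{not} land in $R_{j-1}$ as you claim; ``transcendence already arranged for the remaining $u_k$'s'' cannot help, since those $u_k$ are simply not present yet. The repair is to identify the bad primes precisely: the nonsingular $\pf$ at which $u_j$ cannot be made transcendental are exactly those with $a_j\notin\pf$ and $a_k\in\pf$ for all $k>j$ (the available perturbation ideal $((a_{j+1},\ldots,a_m):a_j)$ lies inside $\pf$ exactly in that case). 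For such $\pf$, writing $c_j:=c-\sum_{k<j}u_k a_k\in R_{j-1}$, one has $a_j u_j - c_j\in(a_{j+1},\ldots,a_m)T\subseteq\pf\cap\pf'$, whence $a_j^N g(u_j)=h+v$ with $h\in R_{j-1}$ and $v\in\pf\cap\pf'$; then $g(u_j)\in\pf$ gives $h\in\pf\cap R_{j-1}=\pf'\cap R_{j-1}$, so $a_j^N g(u_j)\in\pf'$ and $g(u_j)\in\pf'$. With this correction in place your argument goes through.
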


\begin{proof}
The proof of this lemma follows closely the proof of Lemma 3.7 in \cite{Arnosti}. Let $I = (a_1,a_2, \ldots ,a_m)$.  We induct on $m$.  Let $m = 1$.  Then $I = (a)$, and we have $c = au$ for some $u \in T$.  Let $E$ denote the set of elements of $\Min(T)$ that are both nonsingular and that contain $a$ and suppose $E$ is not empty. If $\pf \in E$, then $a \in \pf \cap S$ and so $\Ann_T(a)\not\subseteq \pf$. By the prime avoidance theorem, $\Ann_T(a)\not\subseteq \bigcup_{\pf \in E} \pf$.  Let $q \in \Ann_T(a)$ with $q \not\in \bigcup_{\pf \in E} \pf$.  Since $T$ is a complete local ring with positive dimension, we have by Lemma \ref{lemma:cardinality-dim} that $T$ is uncountable, and so $|S| < |T|$.  Use Lemma \ref{lemma:Arnosti-transcnew} with $C = E$ to find $t \in T$ such that $u + qt + \pf \in T/\pf$ is transcendental over $S/(\pf \cap S)$ for every $\pf \in E$.  Let $u' = u + qt$.  If $E$ is empty, let $u' = u$.  We claim that $R = S[u']_{(S[u']\cap \mf)}$ is the desired CGIP-subring of $T$.  Note that $au' = a(u + qt) = au = c$ and so $c \in aR.$  By Lemma \ref{localize}, the result follows if we can show that conditions \ref{twonew} and \ref{threenew} of Definition \ref{defn:GIPNew} hold for $S[u']$.  

Suppose $\pf, \pf' \in \Min(T)$ such that $\pf \cap S[u'] = \pf' \cap S[u']$.  Then $\pf \cap S = \pf' \cap S$ and so there is an $i \in \{1,2, \ldots ,n\}$ such that $\pf \in \Cc_i$ and $\pf' \in \Cc_i$.  Conversely, suppose $\pf,\pf' \in \Min(T)$ and there is an $i \in \{1,2, \ldots ,n\}$ such that $\pf \in \Cc_i$ and $\pf' \in \Cc_i$. Then $\pf \cap S = \pf' \cap S$.  We show $\pf \cap S[u'] = \pf' \cap S[u']$ by considering three cases.

First, suppose $\pf$ is singular.  Then $\pf = \pf'$ and so $\pf \cap S[u'] = \pf' \cap S[u']$.  Second, suppose $\pf$ is nonsingular and $a \not\in \pf$.  Since $\pf \cap S = \pf' \cap S$, $a \not\in \pf'$.  Let $f \in \pf \cap S[u']$ with $f = s_r(u')^r + \cdots + s_1u' + s_0$ where $s_i \in S$.  Then $a^rf \in \pf \cap S = \pf' \cap S$.  Since $a \not\in \pf'$, we have $f \in \pf'$ and it follows that $\pf \cap S[u'] \subseteq \pf' \cap S[u']$.  Showing containment the other way is a similar argument and so $\pf \cap S[u'] = \pf' \cap S[u']$.  Finally, suppose $\pf$ is nonsingular and $a \in \pf$ (so, in particular, $E$ is not empty).  Then $a \in \pf \cap S = \pf' \cap S$ and we have $a \in \pf'$.  Hence, $\pf,\pf' \in E$ and so $u' + \pf \in T/\pf$ is transcendental over $S /(\pf \cap S)$ and $u' + \pf' \in T/\pf'$ is transcendental over $S /(\pf' \cap S)$. By Lemma \ref{lemma:Arnosti-adjoining-lemma}, $\pf \cap S[u'] = \pf' \cap S[u']$.

We have left to show that condition \ref{threenew} of Definition \ref{defn:GIPNew} holds for $S[u']$. Let $\pf \in \Min(T)$ be nonsingular and let $f \in \pf \cap S[u']$.  First suppose $a \not\in \pf$.  Now, there is a positive integer $r$ such that $a^rf \in \pf \cap S$.  Hence, there is a $w \in \Ann_T(a^rf)$ such that $w \not\in \pf$.  Therefore, $wa^rf = 0$ and we have $wa^r \in \Ann_T(f)$ with $wa^r \not\in \pf$.  Thus, $\Ann_T(f)\not\subseteq \pf$.  Now suppose $a \in \pf$.  Then $\pf \in E$, and so $u' + \pf \in T/\pf$ is transcendental over $S/(\pf \cap S)$. By Lemma \ref{lemma:Arnosti-adjoining-lemma}, we have that condition \ref{threenew} of Definition \ref{defn:GIPNew} holds for $S[u']$.
This completes the base case of our induction.

\medskip

Now let $I = (a_1,a_2, \ldots, a_m)$ with $m \geq 2$ and suppose the result holds for all ideals generated by fewer then $m$ elements.  It is shown in the proof of Lemma 3.7 in \cite{Arnosti} that, since $T$ contains the rationals, we can find generators for $I$ such that given any $\pf \in \Min(T)$, $a_1 \in \pf$ if and only if $a_2 \in \pf$.  So we assume this condition holds for $a_1$ and $a_2$.  Now, $c = a_1t_1 + a_2t_2 + \cdots + a_mt_m$ for some $t_i \in T$.  Let $E$ be the set of minimal prime ideals of $T$ that are both nonsingular and contain $a_1$ and assume that $E$ is not empty.  Let $\pf \in E$.  Since $a_1 \in \pf \cap S$, $\Ann_T(a_1) \not\subseteq \pf$.  By the prime avoidance theorem, $\Ann_T(a_1) \not\subseteq \bigcup_{\pf \in E} \pf$.  Let $q \in \Ann_T(a_1)$ with $q \not\in \bigcup_{\pf \in E} \pf.$  Let $F$ be the set of minimal prime ideals of $T$ that are both nonsingular and do not contain $a_1$ and
let $\{\pf_1, \dots, \pf_k\}$ enumerate the elements of $F$ not containing $q$. For each $\pf_i$, use prime avoidance to find an element $p_i\in \pf_i$ not contained in any other minimal prime. Since we may replace $q$ with $qp_1\dots p_k$, we may assume without loss of generality that $q\in \pf$ for all $\pf\in F$.
Use Lemma \ref{lemma:Arnosti-transcnew} to find $t' \in T$ such that $t_1 + qt' + \pf \in T/\pf$ is transcendental over $S/(\pf \cap S)$ for every $\pf \in E.$  If $F$ is not empty, use Lemma \ref{lemma:Arnosti-transcnew} to find $t'' \in T$ such that $t_1 + t''a_2 + \pf \in T/\pf$ is transcendental over $S/(\pf \cap S)$ for every $\pf \in F.$ If both $E$ and $F$ are not empty, let $u = t_1 + qt' + t''a_2$.  If $E$ is not empty and $F$ is empty, let $u = t_1 + qt'$.  If $E$ is empty and $F$ is not empty, let $u = t_1 + t''a_2$.  Finally, if both $E$ and $F$ are empty, let $u = t_1$. We claim that $S' = S[u]_{(S[u] \cap \mf)}$ is a CGIP-subring of $T$.

By Lemma \ref{localize}, we need only show that conditions \ref{twonew} and \ref{threenew} of Definition \ref{defn:GIPNew} hold for $S[u]$. If $E$ and $F$ are both empty, then all minimal prime ideals of $T$ are singular, and so both conditions hold. So suppose $\Min(T)$ contains elements that are nonsingular. The forward direction of condition \ref{twonew} follows from the same argument that was used in the base case of the induction. For the reverse direction, observe that for all $\pf\in E$ we have $u + \pf = t_1 + qt' + t''a_2 + \pf = t_1 + qt' + \pf$ which is transcendental over $S/(\pf \cap S)$ by construction. Similarly, for $\pf\in F$, we have $u + \pf = t_1 + t'q + t''a_2 + \pf = t_1 + t''a_2 + \pf$ is transcendental over $S/(\pf \cap S)$. Now suppose $\pf,\pf' \in \Min(T)$ and there is an $i \in \{1,2, \ldots ,n\}$ such that $\pf \in \Cc_i$ and $\pf' \in \Cc_i$. Then $\pf \cap S = \pf' \cap S$. If $\pf$ or $\pf'$ is singular then $\pf = \pf'$ and we have $\pf \cap S[u] = \pf' \cap S[u]$. Assume then that $\pf$ and $\pf'$ are both nonsingular.
Then, by \Cref{lemma:Arnosti-adjoining-lemma}, $\pf \cap S[u] = \pf' \cap S[u]$. Hence, condition \ref{twonew} holds. Note that condition \ref{threenew} of Definition \ref{defn:GIPNew} holds for $S[u]$ by \Cref{lemma:Arnosti-adjoining-lemma}. It follows that $S' = S[u]_{(S[u] \cap \mf)}$ is a CGIP-subring of $T$.

Let $J = (a_2, \ldots, a_m)S'$, and let $c^* = c - a_1u$.  Then $c^* \in JT \cap S'$.  By induction there is a CGIP-subring $R$ of $T$ such that $S' \subseteq R$ and $c^* \in (a_2, \ldots,a_m)R$.  So $c^* = a_2r_2 + \cdots + a_mr_m$ for some $r_i \in R$. Now $c = a_1u + a_2r_2 + \cdots + a_mr_m \in (a_1, a_2, \ldots,a_m)R = IR$.  It follows that $R$ is the desired CGIP-subring of $T$.
\end{proof}

In the next lemma we show that if $R$ is a CGIP-subring of $T$ and $\qf \in \Spec(T)$ is not a minimal prime ideal of $T$ that is nonsingular then there exists another CGIP-subring $S$ of $T$ such that $R \subseteq S$ and $S$ contains a generating set for $\qf$. This lemma will be crucial in showing that our final ring is excellent. We note that the lemma is a modification of Lemma 3.6 in \cite{countable-domain}.

\begin{lemma}\label{lemma:find-gen-setnew}
Let $(T,\mf)$ be a complete local ring containing the rationals and suppose $\dim T \geq 1$. Let $\Pc = \{\Cc_1, \dots, \Cc_n\}$ be a partition on $\Min(T)$ such that if $\pf\in \Cc_i$ for some $i \in \{1,2, \ldots ,n\}$ and $\pf$ is singular then $\Cc_i = \{\pf\}$.
Let $\qf\in \Spec(T)$ and suppose $\qf$ is either singular or not minimal. If $(R, R \cap \mf)$ is a  CGIP-subring of $T$, there exists a CGIP-subring $(S, S \cap \mf)$ of $T$ such that $R\subseteq S$ and $S$ contains a generating set for $\qf$. 
\end{lemma}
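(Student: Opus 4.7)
The plan is to iteratively adjoin to $R$ a set of generators of $\qf$, perturbing a finite generating set $q_1,\ldots,q_m$ of $\qf$ (which exists since $T$ is Noetherian) by elements of $\mf\qf$ in order to control transcendence modulo each nonsingular minimal prime. Let $E\subseteq\Min(T)$ denote the (finite) set of nonsingular minimal primes. I will build a chain $R = R_0 \subseteq R_1 \subseteq \cdots \subseteq R_m$ of CGIP-subrings together with elements $t_i \in (q_i + \mf\qf) \cap R_i$. Then $S:=R_m$ contains $(t_1,\ldots,t_m)$, and since $t_i \equiv q_i \pmod{\mf\qf}$ we have $\qf = (t_1,\ldots,t_m) + \mf\qf$; Nakayama's lemma applied to the finitely generated $T$-module $\qf/(t_1,\ldots,t_m)$ then yields $\qf = (t_1,\ldots,t_m) \subseteq S$.

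Before iterating I would record two simplifications. First, condition \ref{threenew} of \Cref{defn:GIPNew} is automatic for every subring of $T$: if $\pf \in \Min(T)$ is nonsingular, then $T_\pf$ is a zero-dimensional regular local ring, hence a field, so every $a \in \pf$ is killed by some element of $T\setminus\pf$. Second, $J:=\mf\qf$ is not contained in any $\pf \in E$: indeed $\mf \not\subseteq \pf$ because $\pf$ is non-maximal (using $\dim T \geq 1$), and $\qf \not\subseteq \pf$ because otherwise $\qf = \pf$ would be a nonsingular minimal prime, contradicting the hypothesis that $\qf$ is singular or non-minimal. This second point is the one place the hypothesis on $\qf$ is used, and it makes $J$ an admissible choice for \Cref{lemma:Arnosti-transcnew} with $C = E$.

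For the inductive step, assume $R_{i-1}$ is a CGIP-subring; it is countable and hence of cardinality strictly less than $|T|$ by \Cref{lemma:cardinality-dim}. Apply \Cref{lemma:Arnosti-transcnew} to $R_{i-1}$, with $C = E$, $J = \mf\qf$, and (in the notation of that lemma) $t = q_i$ and $q = 1$, to obtain $t' \in \mf\qf$ such that $q_i + t' + \pf$ is transcendental over $R_{i-1}/(\pf \cap R_{i-1})$ for every $\pf \in E$. Set $t_i := q_i + t' \in \qf$ and $R_i := R_{i-1}[t_i]_{(R_{i-1}[t_i]\cap\mf)}$. By \Cref{localize}, $R_i$ is CGIP provided conditions \ref{twonew} and \ref{threenew} hold for $R_{i-1}[t_i]$: condition \ref{threenew} is automatic; the forward direction of \ref{twonew} follows by restriction and CGIP of $R_{i-1}$; and for the reverse direction, any two distinct primes in a common class must be nonsingular (by the partition hypothesis), and then \Cref{lemma:Arnosti-adjoining-lemma} applies via the equal intersections $\pf \cap R_{i-1} = \pf' \cap R_{i-1}$ and the transcendence condition on $t_i$. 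The main obstacle is choosing $J$ to be both small enough for Nakayama to finish the argument and large enough to sit outside every $\pf \in E$; once this role of the hypothesis on $\qf$ is pinned down, the rest is a direct iteration of the machinery behind \Cref{lemma:CGIP-close-upnew}.
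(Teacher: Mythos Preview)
Your proof is correct and follows essentially the same approach as the paper's: iteratively perturb a generating set of $\qf$ by elements of $\mf\qf$, use \Cref{lemma:Arnosti-transcnew} to guarantee transcendence over each nonsingular minimal prime, invoke \Cref{lemma:Arnosti-adjoining-lemma} and \Cref{localize} to preserve the CGIP property, and finish with Nakayama. The only differences are cosmetic: you apply \Cref{lemma:Arnosti-transcnew} with $J=\mf\qf$ and $q=1$ (the paper instead takes $J=\mf$ and uses prime avoidance to pick $q=y\in\qf$ outside every $\pf\in E$, which amounts to the same perturbation in $\mf\qf$), and you note that condition~\ref{threenew} of \Cref{defn:GIPNew} is automatic for nonsingular minimal primes, whereas the paper propagates it via \Cref{lemma:Arnosti-adjoining-lemma}.
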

\begin{proof}
Let $ \qf = (x_1, \dots, x_m)$, and so $\{x_1, x_2, \ldots, x_m\}$ is a generating set for $\qf$. We define a chain of CGIP-subrings $R = R_1\subseteq \dots \subseteq R_{m+1}$ such that $R_{m+1}$ contains a generating set for $\qf$. To construct $R_{i+1}$ from $R_i$, we show there exists an element $\tilde{x_i}$ of $T$ such that we can replace $x_i$ with $\tilde{x_i}$ in the generating set and that $R[\tilde{x_i}]_{(R[\tilde{x_i}]\cap \mf)}$ is a CGIP-subring of $T$.

Let $E$ denote the set of nonsingular minimal prime ideals of $T$, and assume $E$ is not empty. Since $\qf\notin E$, we have by Prime Avoidance that  $\bigcup_{\pf\in E}\pf\not\supset \qf$, and thus there exists $y\in \qf$ that is contained in no element of $E$. We now apply \Cref{lemma:Arnosti-transcnew} with $C = E, J = \mf, t = x_1, q = y$, to find an element $\alpha \in \mf$ such $x_1 + \alpha y + \pf \in T/\pf$ is transcendental over $R_1/(R_1\cap \pf)$ for each nonsingular minimal prime ideal $\pf$ of $T$. Define $\tilde{x_1} = x_1 + \alpha y$ if $E$ is not empty and $\tilde{x_1} = x_1$ if $E$ is empty. We now show that $(\tilde{x_1}, x_2, \dots, x_m) = \qf$. Since $\tilde{x_1}-x_1  \in \mf \qf$, we have $(\tilde{x_1}, x_2, \dots, x_m) + \mf\qf = \qf$, and hence we conclude by Nakayama's lemma that $\qf = (\tilde{x_1}, \dots, x_m)$. Moreover, since $\tilde{x_1} + \pf$ is transcendental over $R_1/(R_1\cap \pf)$ for each nonsingular minimal prime ideal $\pf$, we have by Lemma \ref{lemma:Arnosti-adjoining-lemma} and Lemma \ref{localize} that $R_2 = R_1[\tilde{x_1}]_{(R_1[\tilde{x_1}]\cap \mf)}$ is a CGIP-subring of $T$. Repeat this process with $R_1$ replaced by $R_2$ and $x_1$ replaced by $x_2$ to obtain the CGIP-subring $R_3 = R_2[\tilde{x_2}]_{(R_2[\tilde{x_2}]\cap \mf)}$ of $T$. Continue this process until we arrive at the CGIP-subring $S = R_{m + 1}$ of $T$, and note that $S$ contains a generating set for $\qf$.
\end{proof}

Our final ring, as well as some of our intermediary rings, will be a union of a countable chain of CGIP-subrings of $T$.  In the next lemma, we show that such a union is a CGIP-subring of $T$.

\begin{lemma}\label{lemma:Arnosti-unioningnew}
Let $(T,\mf)$ be a complete local ring with $\dim T \geq 1$. Let $\Pc = \{\Cc_1, \dots, \Cc_n\}$ be a partition on $\Min(T)$ such that if $\pf\in \Cc_i$ for some $i \in \{1,2, \ldots ,n\}$ and $\pf$ is singular then $\Cc_i = \{\pf\}$.
Let $(R_i)_{i\in \Z^+}$ be a countable chain of CGIP-subrings of $T$. Then $R:=\bigcup_{i\in \Z^+}R_i$ is a CGIP-subring of $T$.
\end{lemma}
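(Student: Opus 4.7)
The plan is to verify directly that $R = \bigcup_{i \in \Z^+} R_i$ satisfies each of the four defining conditions of a CGIP-subring of $T$, exploiting the fact that any single element or finite collection of elements lies in some $R_i$, so CGIP-properties of the $R_i$ transfer up the chain.

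First, I would check that $(R, R \cap \mf)$ is quasi-local. If $x \in R \setminus (R \cap \mf)$, pick $i$ with $x \in R_i$; since $x \notin R_i \cap \mf$ and $R_i$ is quasi-local, $x$ is a unit in $R_i$, hence in $R$. Thus $R \cap \mf$ is the unique maximal ideal of $R$. Next, $R$ is infinite since $R \supseteq R_1$, and $R$ is countable as a countable union of countable sets, so it will be a CGIP-subring provided conditions \ref{twonew} and \ref{threenew} of Definition \ref{defn:GIPNew} hold.

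Condition \ref{threenew} is the easier of the two: given a nonsingular $\pf \in \Min(T)$ and $a \in \pf \cap R$, choose $i$ so that $a \in R_i$. Then $a \in \pf \cap R_i$, and since $R_i$ is a CGIP-subring of $T$, we obtain $\Ann_T(a) \not\subseteq \pf$. For condition \ref{twonew}, I would show that for any $\pf, \pf' \in \Min(T)$,
\[
\pf \cap R = \pf' \cap R \iff \pf \cap R_i = \pf' \cap R_i \text{ for every } i.
\]
The forward direction uses that $\pf \cap R_i = (\pf \cap R) \cap R_i$, and similarly for $\pf'$; the reverse direction writes $\pf \cap R = \bigcup_i (\pf \cap R_i)$. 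Once this equivalence is established, condition \ref{twonew} for $R$ follows from the same condition applied to each $R_i$: the right-hand side holds for all $i$ if and only if $\pf$ and $\pf'$ lie in a common $\Cc_j$, a condition that does not depend on $i$.

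The statement contains no real obstacle — every step reduces to a CGIP-property already guaranteed at some finite stage of the chain. The only minor point requiring attention is ensuring that the equality $\pf \cap R = \bigcup_i (\pf \cap R_i)$ is used correctly when promoting condition \ref{twonew} from the $R_i$ to $R$, but this is immediate from the definition of the union.
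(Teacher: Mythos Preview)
Your proposal is correct and follows essentially the same approach as the paper's proof: both verify quasi-locality, countability, and conditions \ref{twonew} and \ref{threenew} directly by passing to some $R_i$ containing the relevant elements. The only cosmetic difference is that the paper handles condition \ref{twonew} by checking the two implications directly (using $R_1$ for one direction and all $R_i$ for the other) rather than first stating the intermediate equivalence $\pf \cap R = \pf' \cap R \iff \pf \cap R_i = \pf' \cap R_i$ for all $i$, but the content is identical.
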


\begin{proof}
Since each $R_i$ is countably infinite, so is $R$.  Since, for every $i \in \Z^+$, the maximal ideal of $R_i$ is $R_i \cap \mf$, the maximal ideal of $R$ is $R \cap \mf$. Now let $\pf,\pf' \in \Min(T)$.  Suppose $\pf \cap R = \pf' \cap R$.  Then $\pf \cap R_1 = \pf' \cap R_1$ and so there is a $j \in \{1,2, \ldots ,n\}$ such that $\pf \in \Cc_j$ and $\pf' \in \Cc_j$.  Conversely, suppose there is a $j \in \{1,2, \ldots ,n\}$ such that $\pf \in \Cc_j$ and $\pf' \in \Cc_j$.  Then, for each $i \in \Z^+$, $\pf \cap R_i = \pf' \cap R_i$.  It follows that $\pf \cap R = \pf' \cap R$.  Finally, let $\pf \in \Min(T)$ be nonsingular and let $a \in \pf \cap R$.  Then $a \in \pf \cap R_i$ for some $i$ and so $\Ann_T(a) \not\subseteq \pf$.
\end{proof}

We use the next lemma to show that we can construct CGIP-subrings of $T$ whose completion is $T$.

\begin{lemma}\label{lemma:Arnosti-image-3new}
Let $(T,\mf)$ be a complete local ring containing the rationals and suppose $\dim T \geq 1$. Let $\Pc = \{\Cc_1, \dots, \Cc_n\}$ be a partition on $\Min(T)$ such that if $\pf\in \Cc_i$ for some $i \in \{1,2, \ldots ,n\}$ and $\pf$ is singular then $\Cc_i = \{\pf\}$.
Let $J$ be an ideal of $T$ such that $J\not\subseteq \pf$ for all $\pf \in \Min(T)$ and let $u+J\in T/J$. If $R$ is a CGIP-subring of $T$, then there exists a CGIP-subring $S$ of $T$ such that:
\begin{enumerate}
    \item $R\subseteq S\subset T$;
    \item $u+J$ is in the image of $\pi: S\to T/J$;
    \item For every finitely generated ideal $I$ of $S$, we have $IT\cap S = I$.
\end{enumerate}
\end{lemma}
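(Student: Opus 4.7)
The plan is to proceed in two stages: first, replace $u$ by an element $u' = u + t'$ with $t' \in J$ (so that $u' + J = u + J$) chosen so that $R[u']$ still respects the prescribed minimal-prime structure; second, enlarge by iterated close-up to force $IT \cap S = I$ for every finitely generated ideal $I$ of the final ring.

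For the first stage, let $E \subseteq \Min(T)$ be the (finite) collection of nonsingular minimal primes of $T$. Since $J$ is not contained in any $\pf \in E$ by hypothesis, and $|R| < |T|$ (by \Cref{lemma:cardinality-dim}, as $\dim T \geq 1$), we apply \Cref{lemma:Arnosti-transcnew} with $C = E$, $t = u$, and $q = 1$ to produce $t' \in J$ such that $u + t' + \pf \in T/\pf$ is transcendental over $R/(R \cap \pf)$ for every $\pf \in E$ (taking $t' = 0$ if $E = \emptyset$). Setting $u' = u + t'$ and $R_0 := R[u']_{(R[u'] \cap \mf)}$, we then verify via \Cref{localize} that $R_0$ is a CGIP-subring of $T$: for condition \ref{twonew} of \Cref{defn:GIPNew}, two minimal primes sharing a class are either equal (the singular case, where the class is a singleton by hypothesis) or both nonsingular with equal contractions to $R$, and in the latter case the transcendence of $u'$ modulo both primes permits \Cref{lemma:Arnosti-adjoining-lemma} to lift the equality of contractions to $R[u']$. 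Condition \ref{threenew} follows from the annihilator statement of \Cref{lemma:Arnosti-adjoining-lemma} applied with $\qf = \pf$.

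For the second stage, we build a chain of CGIP-subrings $R_0 \subseteq R_1 \subseteq \cdots$ by a diagonal bookkeeping argument. Since each $R_n$ is countable, the collection of pairs $(I,c)$ with $I$ a finitely generated ideal of $R_n$ and $c \in IT \cap R_n$ is countable; we enumerate all such pairs across all $n$ so that every pair is processed at some step $m > n$. At step $m$, we apply \Cref{lemma:CGIP-close-upnew} with current ring $R_{m-1}$, ideal $I \cdot R_{m-1}$, and element $c$, producing a CGIP-subring $R_m \supseteq R_{m-1}$ with $c \in I \cdot R_m$. Setting $S := \bigcup_m R_m$, \Cref{lemma:Arnosti-unioningnew} shows $S$ is a CGIP-subring of $T$; plainly $R \subseteq R_0 \subseteq S$, and $u + J = u' + J$ lies in the image of $S \to T/J$. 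Given any finitely generated ideal $I = (a_1,\dots,a_k) \subseteq S$ and any $c \in IT \cap S$, the elements $a_1,\dots,a_k,c$ all lie in some $R_n$, and the corresponding pair is processed at some later step $m$, forcing $c \in (a_1,\dots,a_k)R_m \subseteq I$. Thus $IT \cap S = I$. Finally, $S \subsetneq T$ because $S$ is countable while $T$ is uncountable.

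The main obstacle lies in the first stage: verifying that adjoining $u'$ preserves the GIP structure. The structural hypothesis on $\Pc$ that every singular class is a singleton is exactly what makes \Cref{defn:GIPNew} compatible with the ``nonsingular-only'' transcendence machinery of \Cref{lemma:Arnosti-transcnew} and \Cref{lemma:Arnosti-adjoining-lemma}, which would fail to deliver any control across singular primes. Once $R_0$ is in hand, the close-up stage is a routine diagonal bookkeeping built on the tools already established in the paper.
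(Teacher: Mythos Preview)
Your proof is correct and follows the same two-stage approach as the paper: adjoin a perturbation of $u$ that is transcendental modulo the relevant minimal primes (via \Cref{lemma:Arnosti-transcnew}, \Cref{lemma:Arnosti-adjoining-lemma}, and \Cref{localize}), then close up by iterated applications of \Cref{lemma:CGIP-close-upnew} and take the union. The only minor differences are that the paper applies \Cref{lemma:Arnosti-transcnew} with $C = \Min(T)$ rather than just the nonsingular primes (which works equally well and spares your singular/nonsingular case split), and organizes the close-up as a nested two-level iteration $R' \to S_1 \to S_2 \to \cdots$ rather than a single diagonal enumeration.
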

\begin{proof}
Apply Lemma \ref{lemma:Arnosti-transcnew} with $q = 1$ to obtain an element $t' \in J$ such that $u + t' + \pf \in T/\pf$ is transcendental over $R/(\pf \cap R)$ for all $\pf \in \Min(T)$.  Then, using \Cref{lemma:Arnosti-adjoining-lemma} and \Cref{localize}, we have that $R' = R[u + t']_{(
R[u + t'] \cap \mf)}$ is a CGIP-subring of $T$.  Our final CGIP-subring $S$ of $T$ will contain $R'$ and so, since $u + t' \in u + J$, it will follow that $u+J$ is in the image of $\pi: S\to T/J$ and $R \subseteq S$.  Now let $$\Omega = \{(I,c) \, | \, I \mbox{ is a finitely generated ideal of } R' \mbox{ and } c \in IT \cap R'\}.$$ Since $R'$ is countable, $\Omega$ is countable.  Well order $\Omega$ using the nonnegative integers.  We inductively define a family of CGIP-subrings of $T$, one for every nonnegative integer.  Let $R_0 = R'$.  Assume that $R_j$ has been defined, and let $(I,c)$ be the $j$th element of $\Omega$. Use Lemma \ref{lemma:CGIP-close-upnew} to define the CGIP-subring $R_{j + 1}$ of $T$ so that $R_j \subseteq R_{j + 1}$ and $c \in IR_{j + 1}$.  Now let $S_1 = \bigcup_{i = 1}^{\infty} R_i$.  By Lemma \ref{lemma:Arnosti-unioningnew}, $S_1$ is a CGIP-subring of $T$. Let $I$ be a finitely generated ideal of $R'$.  By construction, if $c \in IT \cap R'$, then $c \in IS_1$.  It follows that $IT \cap R' \subseteq IS_1$ for every finitely generated ideal $I$ of $R'$. Repeat the argument with $R'$ replaced by $S_1$ to find a CGIP-subring $S_2$ of $T$ such that $S_1 \subseteq S_2$ and, if $I$ is a finitely generated ideal of $S_1$ then $IT \cap S_1 \subseteq IS_2$.  Repeat to obtain a chain $S_1 \subseteq S_2 \subseteq \cdots $ such that, if $j$ is a nonnegative integer and $I$ is a finitely generated ideal of $S_j$, then $IT \cap S_j \subseteq IS_{j + 1}$.

Let $S = \bigcup_{i = 1}^{\infty}S_i$. By Lemma \ref{lemma:Arnosti-unioningnew}, $S$ is a CGIP-subring of $T$. Now let $I = (s_1, \ldots ,s_m)$ be a finitely generated ideal of $S$, and let $c \in IT \cap S$.  Choose $j$ so that $c, s_1, \ldots ,s_m \in R_j$.  Then $c \in (s_1, \ldots ,s_m)T \cap R_j \subseteq (s_1, \ldots ,s_m)R_{j + 1} \subseteq IS$.  It follows that $IT \cap S = I$.
\end{proof}

In the next lemma, we show that if $R$ is a CGIP-subring of $T$, then there is a CGIP-subring of $T$ that contains $R$ and has $T$ as its completion.  Note that Lemma \ref{lemma:gen-covernew} is analogous to Lemma \ref{Lemma:Precompletion-Cover} from Section \ref{section-3}.

\begin{lemma}\label{lemma:gen-covernew}
Let $(T,\mf)$ be a complete local ring containing the rationals and suppose $\dim T \geq 1$ and $T/\mf$ is countable.  Let $\Pc = \{\Cc_1, \dots, \Cc_n\}$ be a partition on $\Min(T)$ such that if $\pf\in \Cc_i$ for some $i \in \{1,2, \ldots ,n\}$ and $\pf$ is singular then $\Cc_i = \{\pf\}$.
Let $(R, R \cap \mf)$ be a CGIP-subring of $T$. Then there exists a local CGIP-subring $(S, S \cap \mf)$ of $T$ such that $R\subseteq S\subseteq T$ and $\hat{S} = T$.
\end{lemma}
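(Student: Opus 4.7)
The plan is to adapt the proof of \Cref{Lemma:Precompletion-Cover} essentially verbatim, using the ``generalized'' toolkit developed in this section in place of its CIP-analogues. Specifically, I would use \Cref{lemma:Arnosti-image-3new} (with $J = \mf^2$) as the engine that repeatedly enlarges the ring so as to hit each residue class modulo $\mf^2$, and then close the resulting union up into a CGIP-subring with the required completion.

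First, I would check that $J = \mf^2$ is a legal choice for \Cref{lemma:Arnosti-image-3new}: we need $\mf^2 \not\subseteq \pf$ for every $\pf\in \Min(T)$, and this holds because $\dim T \geq 1$ forces $\mf \not\subseteq \pf$ (else $\mf = \pf$ would be a minimal prime of positive height), hence also $\mf^2 \not\subseteq \pf$ since $\pf$ is prime. Next, since $T/\mf$ is countable, \Cref{lemma:cardinality-m-2} gives that $T/\mf^2$ is countable, so I can enumerate its elements as $(u_i + \mf^2)_{i \in \Z^+}$.

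Now I would build a countable chain of CGIP-subrings $R = R_0 \subseteq R_1 \subseteq R_2 \subseteq \cdots$ by induction: assuming $R_i$ has been constructed, apply \Cref{lemma:Arnosti-image-3new} with input ring $R_i$, ideal $J = \mf^2$, and element $u = u_i$ to produce a CGIP-subring $R_{i+1} \supseteq R_i$ such that (a) $u_i + \mf^2$ lies in the image of $R_{i+1} \to T/\mf^2$, and (b) $IT \cap R_{i+1} = I$ for every finitely generated ideal $I$ of $R_{i+1}$. Let $S = \bigcup_{i \geq 0} R_i$. By \Cref{lemma:Arnosti-unioningnew}, $S$ is itself a CGIP-subring of $T$, and its maximal ideal is $S \cap \mf$. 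By construction the natural map $S \to T/\mf^2$ is surjective, and by \Cref{lemma:upwards-closed} applied to the chain $(R_i)$ we get $IT \cap S = I$ for every finitely generated ideal $I$ of $S$. Then \Cref{proposition:the-machine} gives at once that $S$ is Noetherian (hence local) and that $\hat S = T$, completing the proof.

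There is essentially no main obstacle here beyond verifying that the ``generalized'' replacements for the CIP-lemmas (namely \Cref{lemma:Arnosti-image-3new} and \Cref{lemma:Arnosti-unioningnew}) apply under the current hypothesis on the partition $\Pc$; both were proved precisely for this setting, so they slot in directly. The only point that requires a moment's thought is the verification that $\mf^2$ avoids every minimal prime, which is handled above by the $\dim T \geq 1$ hypothesis.
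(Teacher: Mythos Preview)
Your proposal is correct and is exactly the approach the paper takes: the paper's proof is the one-line instruction to rerun the argument of \Cref{Lemma:Precompletion-Cover} with \Cref{lemma:Arnosti-image-3} and \Cref{lemma:Arnosti-unioning} replaced by their CGIP-analogues \Cref{lemma:Arnosti-image-3new} and \Cref{lemma:Arnosti-unioningnew}, which is precisely what you have spelled out in detail.
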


\begin{proof}
Use the proof of Lemma \ref{Lemma:Precompletion-Cover} with Lemma \ref{lemma:Arnosti-image-3} replaced by Lemma \ref{lemma:Arnosti-image-3new} and Lemma \ref{lemma:Arnosti-unioning} replaced by Lemma \ref{lemma:Arnosti-unioningnew}.
\end{proof}


We now focus on making our final ring excellent.  To do this, we construct our final ring so that it contains a generating set for each element of a carefully chosen countable set of prime ideals of $T$.  We start by showing our special set of prime ideals of $T$ is countable and contains no nonsingular minimal prime ideals of $T$.

Recall that all local rings are excellent. By this fact and \Cref{prop:closed-variety}, the singular locus of a complete local ring is equal to the variety of one of its ideals.  If $T$ is a complete local ring and $J$ is an ideal of $T$, then $T/J$ is a complete local ring, and so its singular locus is closed.  Therefore, the singular locus of $T/J$, which we denote as $\Sing(T/J)$, is equal to the variety of some ideal $I/J$ of $T/J$ where $I$ is an ideal of $T$ containing $J$.  We use this fact in the following lemma.

\begin{lemma}\label{Lemma:Countable-Singularitiesnew}
Let $R$ be a countable Noetherian subring of the complete local ring $T$. Define
\[
\mathfrak B_R = \bigcup_{\pf\in \Spec(R)}\{\qf \in \Spec(T): \qf\in \Min(I) \text{ where } \Sing(T/\pf T) = V(I/\pf T)\}
.\]
Then $\mathfrak B_R$ is countable and contains no nonsingular minimal prime ideal of $T$.
\end{lemma}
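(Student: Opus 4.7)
The plan is to handle the two assertions separately: countability first, then exclusion of nonsingular minimal primes.

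For the countability of $\mathfrak B_R$, I would first observe that a countable Noetherian ring has countable spectrum. Indeed, every ideal of $R$ is finitely generated, so the set of finite tuples of elements of $R$ surjects onto the set of ideals of $R$; since $R$ is countable, so is this set of tuples and hence $\Spec(R)$. For each fixed $\pf\in\Spec(R)$, the ring $T/\pf T$ is a complete local (hence excellent) ring by Cohen structure, so its singular locus is closed by \Cref{prop:closed-variety}; thus $\Sing(T/\pf T) = V(I/\pf T)$ for some ideal $I\supseteq \pf T$ of $T$, and since $T/I$ is Noetherian the set $\Min(I)$ is finite. So $\mathfrak B_R$ is a countable union of finite sets and is countable.

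For the second assertion, I would argue by contradiction. Suppose $\qf\in\Min(T)$ is nonsingular, i.e., $T_\qf$ is a regular local ring. Because $\qf$ is minimal, $T_\qf$ has Krull dimension $0$, and being a regular local ring of dimension zero, it is a field. Assume for contradiction that $\qf\in\mathfrak B_R$. Then there is some $\pf\in\Spec(R)$ with $\qf\in\Min(I)$ for the ideal $I$ satisfying $\Sing(T/\pf T) = V(I/\pf T)$. In particular $\pf T \subseteq I \subseteq \qf$, so $\qf/\pf T$ is a prime ideal of $T/\pf T$.

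The key observation is that there is a surjective ring homomorphism $T_\qf \twoheadrightarrow (T/\pf T)_{\qf/\pf T}$, induced by the quotient map $T\to T/\pf T$ followed by localization. The target is a nonzero local ring (being the localization of $T/\pf T$ at a genuine prime), and is a quotient of the field $T_\qf$, hence is itself a field. In particular $(T/\pf T)_{\qf/\pf T}$ is a regular local ring, so $\qf/\pf T \notin \Sing(T/\pf T) = V(I/\pf T)$. But $\qf\in\Min(I)$ gives $\qf \supseteq I$, so $\qf/\pf T \in V(I/\pf T)$, a contradiction. Hence $\qf\notin\mathfrak B_R$, completing the proof. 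I do not anticipate a serious obstacle; the only subtlety is remembering that the nonsingular minimal prime argument uses the dimension-zero reduction to force $T_\qf$ to be a field, and that the localization at $\qf/\pf T$ is automatically nonzero.
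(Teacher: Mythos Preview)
Your proposal is correct and follows essentially the same route as the paper: countability via the countable spectrum of a countable Noetherian ring together with finiteness of $\Min(I)$, and the second part by observing that for a nonsingular minimal $\qf$ the localization $T_\qf$ is a field, forcing $(T/\pf T)_{\qf/\pf T}$ to be a field and contradicting membership in the singular locus. The only cosmetic difference is that the paper notes $\pf T_\qf \subseteq \qf T_\qf = (0)$ to get an isomorphism $(T/\pf T)_{\qf/\pf T}\cong T_\qf$ rather than your surjection, but the content is identical.
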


\begin{proof}
Since $R$ is countable and Noetherian, the set $\Spec(R)$ is also countable. Since $T$ is Noetherian, $\Min(I)$ is finite for all ideals $I$ of $T$. We conclude $\mathfrak B_R$ is countable. Suppose $\qf$ is a minimal prime ideal of $T$ that is nonsingular and is contained in $\mathfrak B_R$. Then $\qf/\pf\in \Sing(T/\pf T)$ for some $\pf \in \Spec(R)$ and  $T_\qf$ is a field. We have $\pf T_\qf\subseteq \qf T_\qf = (0)T_\qf$. But then $(T/\pf T)_{\qf/\pf}\cong T_\qf$ which is a field, contradicting the fact that $\qf/\pf\in \Sing(T/\pf T)$. Thus $\mathfrak B_R$ contains no nonsingular minimal prime ideal of $T$.
\end{proof}

We now have the tools we need to construct our final ring.  In Proposition \ref{prop:partition-precompletionnew}, we construct our ring, and in Corollary \ref{cor:sufficient-conditionnew}, we show that we can choose the partition on $\Min(T)$  so that the ring given in Proposition \ref{prop:partition-precompletionnew}  satisfies our desired properties.

\begin{proposition}\label{prop:partition-precompletionnew}
Let $(T,\mf)$ be a complete local ring containing the rationals and suppose $\dim T \geq 1$ and $T/\mf$ is countable.  Let $\Pc = \{\Cc_1, \dots, \Cc_n\}$ be a partition on $\Min(T)$ such that if $\pf\in \Cc_i$ for some $i \in \{1,2, \ldots ,n\}$ and $\pf$ is singular then $\Cc_i = \{\pf\}$.  Suppose also that if $\pf, \pf'\in \Cc_i$ for some $i$, then ${\dim T/\pf} = {\dim T/\pf'}$.
Then there exists a countable excellent local subring $A$ of $T$ such that $\hat{A} = T$ and, for $\pf, \pf' \in \Min(T)$, we have $\pf \cap A = \pf' \cap A$ if and only if there is an $i \in \{1,2, \ldots ,n\}$ such that $\pf \in \Cc_i$ and $\pf' \in \Cc_i$.
\end{proposition}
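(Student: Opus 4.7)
The plan is to construct $A$ as the nested union of a countable chain of CGIP-subrings, adjoining at each stage generating sets for a countable family of primes of $T$ so as to force excellence. First, apply \Cref{prop:base-ring} to obtain a Noetherian CIP-subring $S_0$ of $T$ (with respect to $\Pc$, taking $\Cc=\Min(T)$) satisfying $\hat{S_0}=T$; since $\Pc$ places each singular minimal prime of $T$ into a singleton class, any CIP-subring for $\Pc$ is automatically a CGIP-subring for $\Pc$, so $S_0$ is a CGIP-subring.

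Given $S_i$, build $S_{i+1}$ as follows. The set $\mathfrak{B}_{S_i}$ from \Cref{Lemma:Countable-Singularitiesnew} is countable and contains no nonsingular minimal prime of $T$, so \Cref{lemma:find-gen-setnew} is applicable to each of its members. Enumerating $\mathfrak{B}_{S_i}$ and iterating \Cref{lemma:find-gen-setnew} produces a countable chain of CGIP-subrings whose union, by \Cref{lemma:Arnosti-unioningnew}, is a CGIP-subring $T_i'$ containing generating sets for every prime in $\mathfrak{B}_{S_i}$. Applying \Cref{lemma:gen-covernew} to $T_i'$ then yields a CGIP-subring $S_{i+1}\supseteq T_i'$ with $\hat{S_{i+1}}=T$.

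Set $A=\bigcup_{i\geq 0} S_i$. By \Cref{lemma:Arnosti-unioningnew}, $A$ is a countable CGIP-subring of $T$, local with maximal ideal $A\cap\mf$, whose minimal primes realize the partition $\Pc$ by condition (\ref{twonew}) of \Cref{defn:GIPNew}. The equality $\hat{A}=T$ (which in particular makes $A$ Noetherian) follows from \Cref{proposition:the-machine}: the composition $A\to T\to T/\mf^2$ is surjective because it already is on each $S_i$, while $IT\cap A=I$ for every finitely generated ideal $I$ of $A$ follows from \Cref{lemma:upwards-closed} together with the corresponding property of each $S_i$ guaranteed by \Cref{lemma:gen-covernew}.

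The main obstacle is showing $A$ is excellent. By \Cref{lemma:excellency-over-Q}, quasi-excellence reduces to verifying that $(T/\pf T)_\qf$ is regular for every $\pf\in\Spec(A)$ and every $\qf\in\Spec(T)$ lying over $\pf$. Since $A$ is Noetherian, a finite generating set for $\pf$ lies in some $S_j$, so $\pf T=(\pf\cap S_j)T$; writing $\Sing(T/\pf T)=V(I/(\pf\cap S_j)T)$ one has $\Min(I)\subseteq \mathfrak{B}_{S_j}$. If some $\qf\in\Sing(T/\pf T)$ lay over $\pf$, it would contain some $\qf''\in\Min(I)$; because $\qf''$ is generated (as a $T$-ideal) by elements of $A$ and $\qf''\cap A\subseteq \qf\cap A=\pf$, this forces $\qf''\subseteq \pf T$, which combined with $\qf''\supseteq\pf T$ gives $\qf''=\pf T$, so $\pf T$ is prime and $I=\pf T$. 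But then $\Sing(T/\pf T)=\Spec(T/\pf T)$, contradicting the fact that $(T/\pf T)_{\pf T}$, being the fraction field of the domain $T/\pf T$, is regular. Finally, universal catenarity of $A$ descends from the completion $T$ along the faithfully flat map $A\to T$, since any complete local ring is universally catenary; this upgrades quasi-excellence to excellence.
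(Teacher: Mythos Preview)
Your construction of $A$ and your proof that $\hat A=T$ and that $A$ is quasi-excellent are essentially the same as the paper's, with only cosmetic differences in how the contradiction for quasi-excellence is phrased.

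The gap is in the final sentence. Universal catenarity does \emph{not} descend along faithfully flat maps: every Noetherian local ring admits a faithfully flat map to its completion, and every complete local ring is universally catenary, so your argument would prove that every Noetherian local ring is universally catenary---but Nagata's examples show this is false. In fact, the very construction you have carried out furnishes counterexamples: if one drops the equidimensionality hypothesis on $\Pc$ and puts two nonsingular minimal primes $\pf_1,\pf_2$ of $T$ with $\dim T/\pf_1\neq\dim T/\pf_2$ into the same block, the resulting $A$ is still quasi-excellent by your argument, but $\widehat{A/(\pf_1\cap A)}$ is not equidimensional, so $A$ is not universally catenary. Notice that your proof never invokes the hypothesis ``if $\pf,\pf'\in\Cc_i$ then $\dim T/\pf=\dim T/\pf'$''---that is the tell.

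The correct route, which the paper takes, is Ratliff's criterion: a Noetherian local ring $A$ is universally catenary if and only if $\widehat{A/\pf}$ is equidimensional for every $\pf\in\Min(A)$. For $\pf\in\Min(A)$ one has $\Min(\widehat{A/\pf})=\{\qf/\pf T:\qf\in\Min(T),\ \qf\cap A=\pf\}$, and by the CGIP property this set is exactly $\{\qf/\pf T:\qf\in\Cc_i\}$ for a single $i$; the equidimensionality hypothesis on $\Pc$ then gives $\dim T/\qf=\dim T/\qf'$ for all such $\qf,\qf'$, so $\widehat{A/\pf}$ is equidimensional and $A$ is universally catenary, hence excellent.
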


\begin{proof}
To start, let $R_0$ be the CIP-subring of $T$ guaranteed by \Cref{prop:base-ring}, and note that $R_0$ is a CGIP-subring of $T$. Let $S_0$ be the CGIP-subring obtained from Lemma \ref{lemma:gen-covernew} so that  $R_0\subseteq S_0$ and $\widehat{S_0} = T$. We now describe how the ring $S_{i+1}$ will be constructed from $S_i$ for any $i\geq 0$, which will closely follow an argument given in \cite{countable-domain}. Enumerate $\mathfrak B_{S_i} = (\qf_i)_{i\in \Z^+}$ where $\mathfrak B_{S_i}$ is defined in the statement of Lemma \ref{Lemma:Countable-Singularitiesnew}. Let $R_i^0 = S_i$ and for each $j\in \Z^+$, let $R_i^j$ be the ring constructed from $R_i^{j-1}$ by adjoining a generating set for $\qf_j$ using \Cref{lemma:find-gen-setnew}. Then we let $R_{i}^\infty = \bigcup_{j=0}^\infty R_i^j$. By \Cref{lemma:Arnosti-unioningnew,Lemma:Countable-Singularitiesnew}, this is a CGIP-subring of $T$. Finally, let $S_{i + 1}$ be the CGIP-subring of $T$ obtained from Lemma \ref{lemma:gen-covernew} so that $R_i^\infty\subseteq S_{i+1}$ and $\widehat{S_{i + 1}} = T$. We claim that $A = \bigcup_{i=0}^\infty S_i$ satisfies each of the desired properties.

First, note that $A$ is a CGIP-subring by \Cref{lemma:Arnosti-unioningnew}, and hence it is countable. Since, by Proposition \ref{proposition:the-machine}, $S_i\to T/\mf^2$ is surjective for each $i$, we have also that $A\to T/\mf^2$ is surjective. Proposition \ref{proposition:the-machine} also gives that, for every $i$ and for every finitely generated ideal $I$ of $S_i$, we have $IT \cap S_i = I$. By \Cref{lemma:upwards-closed}, we have $IT\cap A = I$ for every finitely-generated ideal $I$ of $A$. It follows by \Cref{proposition:the-machine} that $A$ is Noetherian and $\hat{A}=T$. Since $A$ is a CGIP-subring of $T$, we have that for $\pf, \pf' \in \Min(T)$, $\pf \cap A = \pf' \cap A$ if and only if there is an $i \in \{1,2, \ldots ,n\}$ such that $\pf \in \Cc_i$ and $\pf' \in \Cc_i$.

It remains to show that $A$ is excellent. Suppose $\pf\in \Spec(A),\qf\in \Spec(T)$ such that $\qf\cap A = \pf$. We show that $(T/\pf T)_\qf$ is a regular local ring based on an argument given in \cite{countable-domain}. Suppose, for the sake of contradiction, that $(T/\pf T)_\qf$ is not a regular local ring. Then we have $\qf/\pf T \in \Sing(T/\pf T) = V(I/\pf T)$ for some ideal $I$ of $T$. It follows that $\qf\supset \rf\supset I$ for some prime ideal $\rf$ of $T$ which is minimal over $I$. Note that $\rf/\pf T \in V(I/\pf T) = \Sing(T/\pf T)$ and $\rf\cap A = \pf$. Let $\pf = (a_1, \ldots ,a_m)$.  Choose $i$ so that $a_j \in S_i$ for all $j = 1,2, \ldots ,m$ and let $\pf_i = \pf \cap S_i$. Then $T/\pf_iT = T/\pf T$. It follows by construction that $S_{i + 1}$ contains a generating set for $\rf$ and hence so does $A$. But then $(T/\pf T)_\rf = (T/(\rf \cap A)T)_\rf = (T/\rf T)_\rf$, a field. This contradicts that $\rf/\pf T\in V(I/\pf T) = \Sing(T/\pf T)$, and hence $(T/\pf T)_\qf$ must be a regular local ring. By \Cref{lemma:excellency-over-Q}, $S$ is quasi-excellent. We now show $A$ is formally catenary;  it then follows from \cite{Ratliff}, Theorem 2.6 that $A$ is universally catenary. To do this, we check that $\widehat{(A/\pf)}$ is equidimensional for every $\pf\in \Min(A)$. Suppose $\pf\in \Min(A)$ and note that $\Min(\widehat{(A/\pf)}) = \Min(T/\pf T) = \{\qf/\pf T \, | \, \qf \in \Min(T) \mbox{ and } \qf \cap A = \pf\}$. By the properties of $A$, there is an $i \in \{1,2, \ldots ,n\}$ such that $\{\qf/\pf T \, | \, \qf \in \Min(T) \mbox{ and } \qf \cap A = \pf\} = \{\qf/\pf T \, | \, \qf \in \Cc_i\}$. By hypothesis, $\dim(T/\qf) = \dim(T/\qf')$ for any $\qf,\qf'\in \mathcal C_i$. Therefore $\widehat{A/\pf}$ is equidmensional, and by \cite{matsumura_1987}, Theorem 31.6 it follows $A/\pf$ is universally catenary. Thus $A$ is formally catenary. Since $A$ is universally catenary and quasi-excellent, $A$ is excellent. 
\end{proof}

\begin{corollary}\label{cor:sufficient-conditionnew}
Let $T$ be a complete local ring containing the rationals and let $k$ be a positive integer. If $T/\mf$ is countable and $|d_1(T)|\leq k\leq |\Min(T)|$ where $d_1(T)$ is defined in \Cref{defn:equidim-params}, then there exists a countable excellent local subring $S$ of $T$ such that $\hat{S} = T$ and $S$ has $k$ minimal prime ideals.
\end{corollary}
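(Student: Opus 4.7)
The plan is to reduce the corollary to an application of \Cref{prop:partition-precompletionnew}. All the analytic work has already been done there; what remains is essentially combinatorial: exhibit a partition $\Pc$ of $\Min(T)$ with exactly $k$ blocks that satisfies the proposition's two hypotheses (singular minimal primes sit in singleton blocks, and any two minimal primes in a common block share a common value of $\dim(T/\cdot)$), together with disposing of the Artinian edge case that \Cref{prop:partition-precompletionnew} does not cover.

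First I would handle $\dim T = 0$. Here $T$ is Artinian local with $\Min(T) = \{\mf\}$, so $k = 1$, and $T$ itself is countable by a short length induction using that each $\mf^i/\mf^{i+1}$ is a finite-dimensional vector space over the countable field $T/\mf$. Thus $S = T$ is the required countable excellent local ring with $\hat T = T$ and $|\Min(T)| = 1$.

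Now assume $\dim T \geq 1$. I would partition $\Min(T)$ by first separating off the singular minimal primes $\pf_1,\ldots,\pf_s$ and then grouping the nonsingular ones into the equivalence classes $\Ec_1,\ldots,\Ec_r$ under the relation $\dim(T/\pf)=\dim(T/\qf)$. For a minimal prime $\pf$ the localization $T_\pf$ is zero-dimensional and local, so $T_\pf$ is a field if and only if $T_\pf$ is regular, i.e., if and only if $\pf$ is nonsingular; consequently $|d_1(T)| = s + r$ while $|\Min(T)| = s + \sum_{i=1}^{r}|\Ec_i|$. Given the hypothesis $|d_1(T)| \leq k \leq |\Min(T)|$, I can build a partition $\Pc$ with exactly $k$ blocks by taking each $\{\pf_j\}$ as a block and then further subdividing the classes $\Ec_i$ into smaller nonempty subsets until the total count of blocks equals $k$. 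Any such refinement preserves the dimension-constancy condition within each block, so $\Pc$ satisfies both hypotheses of \Cref{prop:partition-precompletionnew}.

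Applying that proposition to $\Pc$ yields a countable excellent local subring $S\subseteq T$ with $\hat S = T$ such that the contraction map $\Min(T)\to\Min(S)$, $\qf\mapsto \qf\cap S$, has fibers equal to the blocks of $\Pc$; since $T$ is faithfully flat over $S$, this map is surjective (as in the proof of \Cref{notexcellent}), and therefore $|\Min(S)|$ equals the number of blocks of $\Pc$, namely $k$. The only conceptually substantive point—and the one I would flag as the main (though still easy) obstacle—is verifying that the range $|d_1(T)| \leq k \leq |\Min(T)|$ is precisely what allows the refinement to exist: the lower bound is forced because neither singular primes nor nonsingular primes of differing dimension can be merged into a common block, while the upper bound is the total number of minimal primes available to distribute.
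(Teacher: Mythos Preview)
Your proposal is correct and follows essentially the same approach as the paper: dispose of the Artinian case with $S=T$, then for $\dim T\geq 1$ pick a partition of $\Min(T)$ into $k$ blocks satisfying the two hypotheses of \Cref{prop:partition-precompletionnew} and invoke that proposition together with faithful flatness to count $|\Min(S)|$. The only difference is that the paper simply asserts ``given the conditions on $k$, it is possible to choose such a partition,'' whereas you spell out the combinatorics by identifying $|d_1(T)| = s + r$ (singular singletons plus nonsingular dimension-classes) and refining the $\Ec_i$; this extra detail is welcome but not a different argument.
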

\begin{proof}
If $\dim T = 0$ then, since $T$ is Noetherian, it is also Artinian. Thus, there exists $N>0$ such that $\mf^N = (0)$. Since $T$ contains $\Q$, $T/\mf$ is infinite. We then have $|T| = |T/\mf^N| = |T/\mf|$ by \Cref{lemma:cardinality-m-2}. Furthermore, $T$ is complete, hence excellent, and so $S = T$ will suffice. 

Now suppose $\dim T\geq 1$. Choose a partition $\Pc = \{\Cc_1, \dots, \Cc_k\}$ on $\Min(T)$ satisfying the following two conditions.
\begin{enumerate}
    \item if $\pf\in \Cc_i$ for some $i \in \{1,2, \ldots ,k\}$ and $\pf$ is singular then $\Cc_i = \{\pf\}$
    \item if $\pf, \pf'\in \Cc_i$ for some $i$, then ${\dim T/\pf} = {\dim T/\pf'}$
\end{enumerate}
 Note that, given the conditions on $k$, it is possible to choose such a partition. Now use Proposition \ref{prop:partition-precompletionnew} to find a countable excellent local subring $S$ of $T$ such that $\hat{S} = T$ and, for $\pf, \pf' \in \Min(T)$, we have $\pf \cap S = \pf' \cap S$ if and only if there is an $i \in \{1,2, \ldots ,k\}$ such that $\pf \in \Cc_i$ and $\pf' \in \Cc_i$. Since $T$ is a faithfully flat extension of $S$, if $\qf \in \Min(S)$, there is a $\pf \in \Min(T)$ such that $\pf \cap S = \qf$. It follows that $S$ has $k$ minimal prime ideals.
\end{proof}
\label{temp-label}
In fact, our result is slightly stronger than stated. The value $k$ can be viewed as an abstraction; what these results actually show is which maps $\Min(T)\to \Min(S)$ are possible. 

The next proposition proves the second part of our main theorem.

\begin{proposition}\label{prop:necessary-conditionnew}
Suppose $A$ is a countable excellent local ring with $k$ minimal prime ideals and let $\hat{A} = T$. If $\mf$ is the maximal ideal of $T$ then $T/\mf$ is countable and $|d_2(T)|\leq k\leq |\Min(T)|$ where $d_2(T)$ is defined in Definition \ref{defn:equidim-params}.
\end{proposition}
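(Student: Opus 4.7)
The plan is to prove the three conclusions separately, in order of increasing difficulty.

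First, the countability of $T/\mf$ is immediate. If $\mf_A$ denotes the maximal ideal of $A$, then $T=\widehat A$ and $\mf_A \widehat A=\mf$, so the surjection $A\twoheadrightarrow \widehat A/\mf = T/\mf$ has kernel $\mf_A$ and exhibits $T/\mf$ as a quotient of the countable ring $A$. Hence $T/\mf$ is countable.

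Next, the inequality $k\le |\Min(T)|$ follows from the faithful flatness of the completion map $A\to T$. Going-down holds for a faithfully flat extension, so for each $\qf\in\Min(A)$ there exists $\pf\in\Spec(T)$ with $\pf\cap A=\qf$ and $\pf$ minimal over $\qf T$; any such $\pf$ must itself be a minimal prime of $T$ (else it would contain a strictly smaller prime of $T$ whose contraction to $A$ would be a prime strictly contained in the minimal prime $\qf$). Thus the contraction map $\Min(T)\to\Min(A)$ is surjective, giving $|\Min(T)|\ge|\Min(A)|=k$.

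The main content of the proposition is the bound $|d_2(T)|\le k$, and this is where excellence enters. Since $A$ is excellent, $A$ is universally catenary. By the Ratliff theorem invoked in \Cref{prop:partition-precompletionnew} (\cite{Ratliff}, Theorem 2.6, together with \cite{matsumura_1987}, Theorem 31.6), universal catenarity of $A$ is equivalent to $A$ being formally catenary, that is, to $\widehat{A/\qf}=T/\qf T$ being equidimensional for every $\qf\in\Min(A)$. Fix $\qf\in\Min(A)$. The minimal primes of $T/\qf T$ correspond bijectively to the minimal primes of $T$ contracting to $\qf$, and equidimensionality of $T/\qf T$ says that $\dim(T/\pf)=\dim T/\qf T$ for every $\pf\in\Min(T)$ with $\pf\cap A=\qf$. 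Consequently any two minimal primes $\pf,\pf'$ of $T$ lying over the same minimal prime of $A$ satisfy $\dim(T/\pf)=\dim(T/\pf')$, i.e. $\pf\sim_2\pf'$.

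This shows that the partition of $\Min(T)$ into fibers of the surjection $\Min(T)\twoheadrightarrow\Min(A)$ refines the partition by $\sim_2$; equivalently, each $\sim_2$-class is a union of fibers. Therefore
\[
|d_2(T)|=|\Min(T)/\!\sim_2|\le |\Min(A)|=k,
\]
completing the proof. The one step that requires care is the appeal to the Ratliff-style characterization of universal catenarity in terms of equidimensionality of completions of the quotients $A/\qf$; everything else is formal.
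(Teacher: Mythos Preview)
Your proof is correct and follows essentially the same approach as the paper: both deduce countability of $T/\mf$ from $A/(\mf\cap A)\cong T/\mf$, obtain $k\le|\Min(T)|$ from faithful flatness of $A\to T$, and derive $|d_2(T)|\le k$ by using that excellence implies universal catenarity implies formal catenarity, so that $T/\qf T$ is equidimensional for each $\qf\in\Min(A)$. Your version simply spells out in more detail why the fibers of $\Min(T)\to\Min(A)$ refine the $\sim_2$-partition.
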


\begin{proof}
Suppose $A$ is a countable excellent local ring with $\widehat A = T$. We show that $|d_2(T)|\leq \abs{\Min(A)}\leq \abs{\Min(T)}$. The second inequality holds since $T$ is a faithfully flat extension of $A$. Now let $\qf, \qf' \in \Min(\pf T)$ for some $\pf\in \Min(A)$. Then since $A$ is universally catenary, it is formally catenary, and thus $\widehat{(A/\pf)} = T/\pf T$ is equidimensional. It follows that $\dim(T/\qf T) = \dim(T/\qf' T)$ and the first inequality follows. Finally, since $A$ is countable and ${A/(\mf\cap A)} \cong T/\mf$, we conclude that $T/\mf$ is also countable.
\end{proof} 

We now restate our main theorem, which follows from the last two results.

\begin{theorem*}[\ref{Thm:ctbl-excellent-gen}]
Let $(T, \mf)$ be a complete local ring containing the rationals and let $k$ be a positive integer. 
\begin{itemize}
    \item If both of the following conditions hold, then there exists a countable excellent local ring $A$ such that $\hat{A} = T$ and $|\Min(A)| = k$.
    \begin{enumerate}
        \item $T/\mf$ is countable;
        \item $|d_1(T)|\leq k\leq |\Min(T)|.$
    \end{enumerate}
    \item If there exists a countable excellent local ring $A$ such that $\hat{A} = T$ and $|\Min(A)| = k$, then
    \begin{enumerate}
        \item $T/\mf$ is countable and
        \item $|d_2(T)|\leq k\leq |\Min(T)|$.
    \end{enumerate}
\end{itemize}
\end{theorem*}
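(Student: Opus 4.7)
The plan is to deduce the theorem as an immediate combination of the two results just established, namely Corollary~\ref{cor:sufficient-conditionnew} for the sufficiency direction and Proposition~\ref{prop:necessary-conditionnew} for the necessity direction, since the statement of Theorem~\ref{Thm:ctbl-excellent-gen} is precisely the conjunction of these two implications.

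For the first bullet (sufficiency), I would invoke Corollary~\ref{cor:sufficient-conditionnew} verbatim: given $(T,\mf)$ complete local containing $\Q$ with $T/\mf$ countable and $|d_1(T)|\le k\le|\Min(T)|$, that corollary produces a countable excellent local subring $A\subseteq T$ with $\widehat A=T$ and $|\Min(A)|=k$. The real content here has already been carried out: it rests on choosing a partition $\Pc=\{\Cc_1,\dots,\Cc_k\}$ of $\Min(T)$ that isolates each singular minimal prime in its own block and keeps $\dim T/\pf$ constant on each block (which is possible precisely because of the hypothesis $|d_1(T)|\le k\le|\Min(T)|$), and then feeding this partition into Proposition~\ref{prop:partition-precompletionnew}, which iteratively extends a base CGIP-subring by adjoining generating sets for each prime in $\mathfrak B_{S_i}$.

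For the second bullet (necessity), I would invoke Proposition~\ref{prop:necessary-conditionnew} directly. Its three conclusions are independent and each essentially immediate: countability of $T/\mf$ comes from $A/(\mf\cap A)\cong T/\mf$ together with countability of $A$; the upper bound $|\Min(A)|\le|\Min(T)|$ comes from the faithful flatness of $T$ over $A$, which guarantees that every minimal prime of $A$ lies under some minimal prime of $T$; and the lower bound $|d_2(T)|\le|\Min(A)|$ comes from the fact that excellence of $A$ implies universal (hence formal) catenarity, so for each $\pf\in\Min(A)$ the ring $\widehat{A/\pf}=T/\pf T$ is equidimensional, forcing all $\qf\in\Min(T)$ with $\qf\cap A=\pf$ to share the same value of $\dim T/\qf$ and hence to lie in a single $\sim_2$-class.

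The hardest work has already been discharged in Sections~\ref{section-3} and~\ref{section-4}, so at this point no further obstacle remains; the only thing to write is an explicit appeal to \Cref{cor:sufficient-conditionnew,prop:necessary-conditionnew}. The one cosmetic point to mention is the asymmetry between $d_1$ in the sufficiency and $d_2$ in the necessity, which is intrinsic to the theorem and reflects the gap left open in the paper (and is in part the reason for the partial characterizations in Corollaries~\ref{cor:ctbl-excellent-red} and~\ref{cor:ctbl-excellent-char}); no additional argument is needed to reconcile this since the two implications are being stated in opposite directions.
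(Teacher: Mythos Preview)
Your proposal is correct and matches the paper's own proof exactly: the paper likewise deduces the first bullet from Corollary~\ref{cor:sufficient-conditionnew} and the second from Proposition~\ref{prop:necessary-conditionnew}, with no additional argument. Your expanded commentary on why those results apply is accurate but not required.
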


\begin{proof}
The first statement follows from Corollary \ref{cor:sufficient-conditionnew} and the second statement follows from Proposition \ref{prop:necessary-conditionnew}.
\end{proof}

For some rings we have $|d_2(T)| < |d_1(T)|$, and for these rings \Cref{Thm:ctbl-excellent-gen} does not completely characterize the possible minimal spectra for the ring $A$. Nevertheless, this result \textit{does} allow us to prove two characterization-style results.

\begin{cor*}[\ref{cor:ctbl-excellent-red}]
Let $(T,\mf)$ be a complete local ring containing the rationals and satisfying Serre's $R_0$ condition. Let $k$ be a positive integer. Then $T$ is the completion of a countable excellent local ring with $k$ minimal prime ideals if and only if $T/\mf$ is countable and $|d_2(T)| \leq k\leq |\Min(T)|$.
\end{cor*}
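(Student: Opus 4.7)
The plan is to deduce this corollary directly from the main result \Cref{Thm:ctbl-excellent-gen} by exploiting the remark, made immediately following \Cref{defn:equidim-params}, that the equivalence relations $\sim_1$ and $\sim_2$ coincide whenever $T$ satisfies Serre's $R_0$ condition. Concretely, Serre's $R_0$ condition asserts that $T_\pf$ is a regular local ring of dimension zero, i.e.\ a field, for every $\pf \in \Min(T)$. Consequently both clauses ``$T_\pf$ is a field'' and ``$T_\qf$ is a field'' in the definition of $\sim_1$ are automatically satisfied for all minimal primes, so the condition $\pf \sim_1 \qf$ reduces to $\dim(T/\pf) = \dim(T/\qf)$, which is precisely the condition $\pf \sim_2 \qf$. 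Thus $|d_1(T)| = |d_2(T)|$ under the hypothesis.

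For the forward direction, I would assume that $T$ is the completion of a countable excellent local ring $A$ with $|\Min(A)| = k$. Applying the second bullet of \Cref{Thm:ctbl-excellent-gen} immediately yields that $T/\mf$ is countable and that $|d_2(T)| \leq k \leq |\Min(T)|$, which is exactly the asserted conclusion.

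For the reverse direction, I would assume that $T/\mf$ is countable and that $|d_2(T)| \leq k \leq |\Min(T)|$. Invoking the identity $|d_1(T)| = |d_2(T)|$ established above, this is equivalent to $|d_1(T)| \leq k \leq |\Min(T)|$, so both hypotheses of the first bullet of \Cref{Thm:ctbl-excellent-gen} are satisfied. The theorem then produces a countable excellent local ring $A$ with $\widehat{A} = T$ and $|\Min(A)| = k$, completing the proof.

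There is no real obstacle here: the entire content of the corollary is the observation that $R_0$ forces $d_1 = d_2$, at which point the sufficient and necessary conditions of \Cref{Thm:ctbl-excellent-gen} become identical and the characterization follows for free. The only thing that requires a moment's care is verifying that $R_0$ genuinely implies $T_\pf$ is a field for $\pf \in \Min(T)$, which is immediate because a regular local ring of dimension zero (and a minimal prime in a Noetherian ring has height zero) is necessarily a field.
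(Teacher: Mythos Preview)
Your proof is correct and follows essentially the same approach as the paper: the paper's proof is the one-line observation that $T_\qf$ is a field for every $\qf\in\Min(T)$, giving $|d_1(T)|=|d_2(T)|$, after which the two bullets of \Cref{Thm:ctbl-excellent-gen} collapse into a single biconditional. You have simply spelled out the two directions explicitly.
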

\begin{proof}
Since $T_\qf$ is a field for every $\qf\in \Min(T)$, we have $|d_1(T)| = |d_2(T)|$. 
\end{proof}

\begin{cor*}[\ref{cor:ctbl-excellent-char}]
Let $(T,\mf)$ be a complete local ring containing the rationals. The following are equivalent.
\begin{enumerate}
    \item $T$ is the completion of a countable excellent local ring with $|\Min(T)|$ minimal prime ideals;
    \item $T$ is the completion of a countable excellent local ring;
    \item $T/\mf$ is countable.
\end{enumerate}
\end{cor*}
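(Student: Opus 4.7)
The plan is to prove the cycle of implications $(1) \Rightarrow (2) \Rightarrow (3) \Rightarrow (1)$, leveraging the two parts of the main theorem that have already been established in the preceding Corollary \ref{cor:sufficient-conditionnew} and Proposition \ref{prop:necessary-conditionnew}.

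The implication $(1) \Rightarrow (2)$ is immediate, since it merely drops the requirement on the number of minimal prime ideals. For $(2) \Rightarrow (3)$, suppose $T = \widehat{A}$ for some countable excellent local ring $A$. Since $A$ is Noetherian, it has finitely many minimal prime ideals; let $k = |\Min(A)|$. Then Proposition \ref{prop:necessary-conditionnew} applies to yield that $T/\mf$ is countable (along with cardinality constraints on $k$ that we do not need here).

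For $(3) \Rightarrow (1)$, set $k = |\Min(T)|$, which is a positive integer since $T$ is Noetherian. By construction, there is a surjection $\Min(T) \twoheadrightarrow d_1(T)$, so $|d_1(T)| \leq |\Min(T)| = k \leq |\Min(T)|$. The hypotheses of Corollary \ref{cor:sufficient-conditionnew} are therefore satisfied, producing a countable excellent local subring $S$ of $T$ with $\widehat{S} = T$ and $|\Min(S)| = k = |\Min(T)|$. This establishes $(1)$.

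There is no real obstacle here; the corollary is a clean consequence of the main theorem. The only small thing to note is that in the $(3) \Rightarrow (1)$ step we must use $d_1(T)$ (from the sufficiency direction) rather than $d_2(T)$, but this causes no issue because $|d_1(T)| \leq |\Min(T)|$ always holds trivially, so the upper bound $k = |\Min(T)|$ automatically accommodates any value of $|d_1(T)|$.
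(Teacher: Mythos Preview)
Your proof is correct and follows exactly the same cycle $(1)\Rightarrow(2)\Rightarrow(3)\Rightarrow(1)$ as the paper, invoking Proposition~\ref{prop:necessary-conditionnew} for $(2)\Rightarrow(3)$ and Corollary~\ref{cor:sufficient-conditionnew} with $k=|\Min(T)|$ for $(3)\Rightarrow(1)$. The only difference is that you spell out a few details (such as why $|d_1(T)|\leq|\Min(T)|$) that the paper leaves implicit.
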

\begin{proof}
We have (1) implies (2) trivially. We then have (2) implies (3) by \Cref{prop:necessary-conditionnew}. Finally, (3) implies (1) by \Cref{cor:sufficient-conditionnew} and the fact that $|d_1(T)|\leq |\Min(T)|$.
\end{proof}
\printbibliography
\end{document}